\newtheorem{theorem}{\textit{Theorem}}
\numberwithin{theorem}{section}
\newtheorem{lemma}{\textit{Lemma}}
\newtheorem{corollary}{\textit{Corollary}}
\numberwithin{corollary}{section}
\newtheorem{assumption}{\textit{Assumption}}
\newtheorem{example}{\textit{Example}}
\newtheorem*{aggregator}{\textit{Aggregator's Objective}}
\newtheorem*{operator}{\textit{Operator's Objective}}
\newtheorem*{system}{\textit{System's Goal}}
\newtheorem{definition}{\textit{Definition}}
\theoremstyle{definition}
\numberwithin{definition}{section}
\numberwithin{example}{section}
\newtheorem{remark}{\textit{Remark}}
\newcommand{\nn}{N}
\newcommand{\ns}{W}
\newcommand{\nt}{T}
\newcommand{\nm}{m}
\DeclareMathOperator*{\argmin}{arg\,min}
\DeclareMathOperator*{\arginf}{arg\,inf}
\begin{document}
%


\title{ Learning-Based Predictive Control via\\
Real-Time Aggregate Flexibility}
%
%
%

\author{Tongxin~Li,
        Bo~Sun,  
        Yue~Chen, 
        Zixin~Ye,
        Steven~H.~Low,
        and Adam~Wierman
\thanks{Tongxin Li and Steven Low acknowledge the support received from National Science Foundation
(NSF) through grants CCF 1637598, ECCS 1931662 and CPS ECCS 1932611. Bo Sun is supported
by Hong Kong Research Grant Council (RGC) General Research Fund (Project 16207318). Adam
Wierman’s research is funded by NSF (AitF-1637598 and CNS-1518941), Amazon AWS and VMware.}
\thanks{Li, Low and Wierman are with the Computing + Mathematical Sciences Department, California Institute of Technology, Pasadena, CA 91125 USA (e-mails: \{tongxin, slow, adamw\}@caltech.edu)}
\thanks{Sun is with the Department of Electronic and Computer Engineering, The Hong Kong University of Science and Technology, Hong Kong (e-mail: bsunaa@connect.ust.hk)}
\thanks{Chen is with the Department of Mechanical and Automation Engineering, the Chinese University of Hong Kong, Hong Kong SAR, China. (e-mail: yuechen@mae.cuhk.edu.hk)}
\thanks{Ye is with the Electrical Engineering Department, California Institute of Technology, Pasadena, CA 91125 USA (e-mail: zyye@caltech.edu)}}

\maketitle


\begin{abstract}

Aggregators have emerged as crucial tools for the coordination of distributed, controllable loads. To be used effectively, an aggregator must be able to communicate the available flexibility of the loads they control, as known as the aggregate flexibility to a system operator. However, most of existing aggregate flexibility measures often are slow-timescale estimations and much less attention has been paid to real-time coordination between an aggregator and an operator.
In this paper, we consider solving an online optimization in a closed-loop system and present a design of \textit{real-time} aggregate flexibility feedback, termed the \textit{maximum entropy feedback} (MEF). 
In addition to deriving analytic properties of the MEF,  {combining learning and control, we show that it can be approximated using reinforcement learning and used as a penalty term in a novel control algorithm -- the \textit{penalized predictive control} (PPC), which modifies vanilla model predictive control (MPC).} The benefits of { our scheme} are (1). \textit{Efficient Communication}. {An operator running PPC does not need to know the exact states and constraints of the loads, but only the MEF.} (2). \textit{Fast Computation}. {The PPC often has much less number of variables than an MPC formulation.} (3). \textit{Lower Costs.} {We show that under certain regularity assumptions, the PPC is optimal. We illustrate the efficacy of the PPC using a dataset from an adaptive electric vehicle charging network and show that PPC outperforms classical MPC.}
\end{abstract}

\begin{IEEEkeywords}
Aggregate flexibility, closed-loop control systems, online optimization, model predictive control, reinforcement learning, electric vehicle charging
\end{IEEEkeywords}

%

\IEEEpeerreviewmaketitle

\section{Introduction}
\label{sec:intro}
The uncertainty and volatility of renewable sources such as wind and solar power has created a need to exploit the flexibility of distributed energy resources (DERs) and aggregators have emerged as dominate players for coordinating these loads~\cite{callaway2010achieving,burger2017review}. 
An aggregator can coordinate a large pool of DERs and be a single point of contact for independent system operators (ISOs) to call on for flexibility.  
This enables ISOs to minimize cost, respond to unexpected fluctuations of renewables, and even mitigate failures quickly and reliably.  
Typically, an ISO communicates a time-varying signal to an aggregator, e.g., a desired power profile, that optimizes ISO objectives and the aggregator coordinates with the DERs to collectively respond to the time-varying signal as faithfully as possible, e.g., by shaping their aggregate power consumption to follow ISO's power profile, while satisfying DER constraints.
These constraints are often private to the loads, e.g., 
satisfying energy demands of electric vehicles before their deadlines. They limit the flexibility available to the aggregator so the aggregator must also communicate with the ISO by providing a feedback signal that
quantifies its available flexibility. This feedback provides ISO with crucial information for determining the signal it sends to the aggregator. Thus the aggregator and the ISO form a closed-loop control system to manage the aggregate flexibility of DERs.

This paper focuses on the design of this closed-loop system and, in particular, the design of real-time feedback signal from the aggregator to the ISO quantifying the available flexibility.  The design of \emph{the aggregate flexibility feedback}  signal is complex and has been the subject of significant research over the last decade, e.g.,~\cite{hao2014characterizing,hao2014aggregate,sajjad2016definitions,zhao2017geometric,madjidian2018energy,chen2018aggregating,sadeghianpourhamami2018quantitive,evans2019graphical,bernstein2016aggregation}.  Any feedback design must balance a variety of conflicting goals. 
Given the scale, complexity and privacy of the load constraints, it may neither be possible nor desirable to communicate precise information about every load.  Instead, aggregate flexibility feedback must be a concise summary of a system's constraints and it
must limit the leakage about specific load constraints. 
On the other hand, the feedback sent by an aggregator needs to be informative enough that it allows the ISO to achieve operational objectives, e.g., minimize cost, and, most importantly, {containing feasibility information of the whole system with respect to the private load constraints.} Moreover,
a design for a flexibility feedback signal must be general enough to be applicable for a wide variety of controllable loads, e.g., electric vehicles (EVs), heating, ventilation, and air conditioning (HVAC) systems,  energy storage units, thermostatically controlled loads, residential loads, and pool pumps. 
It is impractical to design different feedback signals for each load, so the same design must work for all DERs.
 
The challenge and importance of the design of flexibility feedback signals has led to the emergence of a rich literature. {In many cases, the literature focuses on specific classes of controllable loads, such as EVs~\cite{wenzel2017real}, heating, ventilation, and air conditioning (HVAC) systems~\cite{hao2014ancillary,wei2015proactive},  energy storage units~\cite{evans2019graphical}, thermostatically controlled loads~\cite{hao2014aggregate} or residential loads and pool pumps~\cite{sajjad2016definitions,meyn2015ancillary}.}  {In the context of these applications, a variety of approaches have been suggested, e.g., convex geometric approximations via virtual battery models~\cite{hao2014aggregate,zhao2017geometric}, hyper-rectangles~\cite{chen2018aggregating} and graphical interpretations~\cite{evans2019graphical};
scheduling based
aggregation~\cite{subramanian2013real,papadaskalopoulos2013decentralized}; linear combination of demand bit curves~\cite{wei2015proactive}; and probability-based characterization~\cite{sajjad2016definitions,meyn2015ancillary}.} These approaches have all yielded some success, especially in terms of quantifying available aggregate flexibility (see Section~\ref{sec:literature} for more detail on related work).  
However, nearly all prior work only focused on slower-timescale estimations and does not meet the goal of providing \emph{real-time} aggregate flexibility feedback. The fast-changing environment and the uncertainties of the DERs, however, demand real-time flexibility feedback. For example, in an EV charging facility, it is notoriously challenging to predict future EV arrivals and their battery capacities. With on-site solar generation, the aggregator's dynamical system can be time-varying and non-stationary, so it is crucial that real-time feedback be defined and approximated for it to be used in online feedback-based applications. {Furthermore, most of the existing frameworks are designated for specific tasks, such as managing HVAC systems~\cite{hao2014ancillary,wei2015proactive}, and therefore may not be applicable to other applications.}
Reinforcement learning (RL), especially, deep RL, has been used widely as approximation tools in smart grid applications. Joint pricing and EV charging scheduling for a single EV charger is considered in~\cite{wang2019reinforcement} using state–action–reward–state–action (SARSA). But it is unclear how the proposed method in~\cite{wang2019reinforcement} can be extended to allow multiple chargers.  Q-learning is used to estimate the residual energy in an energy storage system at the end of each day in~\cite{wang2015near} and determine the aggregate action
for thermostatically controlled loads (TCLs)~\cite{claessens2018model}. The authors in~\cite{chen2020learning} combine evolution strategies and model predictive control (MPC) to coordinate heterogeneous TCLs. Most existing studies, including the aforementioned works typically use RL for a ``central controller'' (which is an operator in our context).  Instead we use it for the aggregator to learn flexibility representations.

{
To the best of our knowledge, no paper has focused on the design of real-time coordination between an aggregator and a system operator that achieves the goals laid out above, except for some preliminary results in~\cite{li2020real,li2021information}.
Those results rely on a novel design of a real-time feedback signal that can be used to quantify the aggregate flexibility and coordinate real-time control. 
In this paper, we extend the design of the feedback signal to a more general dynamic system with time-varying and non-stationary constraints, and we mainly focus on how to apply the real-time feedback to practical applications (e.g., EV charging) in power systems.
Towards this goal, we propose a reinforcement learning based approach to approximate this feedback and further incorporate the feedback into a penalized predictive control (PPC) scheme. On the theory side, we prove the optimality of the proposed PPC scheme, and through extensive numerical tests, we validate the superior empirical performance of PPC over classic benchmarks, such as MPC. 
}

\subsection{Contributions.}  
In summary, to complement previous research, this paper considers a closed-loop control model formed by a system operator (central controller) and an aggregator (local controller) and propose a novel design of \textit{real-time aggregate flexibility} feedback, called the \textit{maximum entropy feedback} (MEF) that
quantifies the flexibility available to an aggregator. Based on the definition of MEF, we design a reward function, which allows MEF to be efficiently learned by model-free RL algorithms. Our main contributions are:
\begin{enumerate}

\item We introduce a model of the real-time closed-loop control system formed by a system operator and an aggregator. This work is the first to close the loop and both define a concise measure of aggregate flexibility and show how it can be used by the system operator in an online manner to optimize system objectives while respecting the constraints of the aggregator's loads. 

\item
Within this model we define the ``optimal'' real-time flexibility feedback as the solution to an optimization problem that maximizes the entropy of the feedback vector. The use of entropy in this context is novel and to the best of our knowledge, this article is among the first to rigorously define a notion for \textit{real-time} aggregate flexibility with provable properties. {In particular we show that the exact MEF
allows the system operator to maintain feasibility and enhance flexibility.}

\item {Furthermore, we propose a novel combination of control and learning by integrating model predictive control (MPC) and the defined MEF.} Using the MEF as a penalty term, we introduce an algorithm called the \textit{penalized predictive control} (PPC), which only requires the system operator to receive the MEF at each time, \textit{without} knowing the states and dynamics of the aggregator. {We also prove that, under certain regularity conditions, the actions given by PPC are optimal.}

\item Finally, we demonstrate the efficacy of the proposed scheme using real EV charging data from Caltech's ACN-Data~\cite{lee2019acn}. Our experiments show that by sending simple action signals generated by the PPC, a system operator is able to coordinate with an EV charging aggregator to satisfy almost all EV charging demands, while only knowing the MEF learned by a model-free off-policy RL algorithm. The PPC is also showed to achieve lower cost than MPC, which in addition needs to have access to the complete state of the loads.
\end{enumerate}

\subsection{Related literature.} 
\label{sec:literature}
The growing importance of aggregators for the integration of controllable loads and the challenge of defining and quantifying the flexibility provided by aggregators has led to the emergence of a rich literature.  Broadly, this work can be separated into three approaches.  

\textit{Convex geometric approximation.} The idea of representing the set of aggregate loads as a virtual battery model dates back to~\cite{hao2014characterizing,hao2014aggregate}. In~\cite{zhao2017geometric}, flexibility of an aggregation of thermostatically controlled loads (TCLs) was defined as the Minkowski sum of individual polytopes, which is approximated by the homothets of a virtual battery model using linear programming. The recent paper~\cite{chen2018aggregating} takes a different approach and defines the aggregate flexibility as upper and lower bounds so that each trajectory to be tracked between the bounds is disaggregatable and thus feasible. However, convex geometric approaches cannot be extended to generate real-time flexibility signals because the approximated sets cannot be decomposed along the time axis. In~\cite{bernstein2016aggregation}, a belief function of setpoints is introduced for real-time control. However, feasibility can only be guaranteed when each setpoint is in the belief set and this may not be the case for systems with memory.

\textit{Scheduling algorithm-driven analysis.} Scheduling algorithms that enable the aggregation of loads have been studied in depth over the past decade. {The authors of~\cite{ma2011decentralized,gan2012optimal} introduced a decentralized algorithm with a real-time
implementation for EV charging to track a given load profile.}
The authors of
\cite{subramanian2012real} considered the feasibility of matching a given power trajectory and show that causal optimal policies do not exist. In this work, aggregate flexibility was implicitly considered as the set of all feasible power trajectories. Three heuristic causal scheduling policies were compared and the results were extended to aggregation of deferrable loads and
storage in~\cite{subramanian2013real}. Furthermore, decentralized participation
of flexible demand from heat pumps and EVs was
addressed in~\cite{papadaskalopoulos2013decentralized}.   Notably, the flexibility signals that have emerged from this literature generally are applicable only to specific policies and DERs.

\textit{Probability-based characterization.} There is much less work on probabilistic methods. The aggregate flexibility of residential loads was defined based on positive and negative
pattern variations by analyzing collective behaviour of aggregate users~\cite{sajjad2016definitions}. A randomized and decentralized control architecture for systems of deferrable loads was proposed in~\cite{meyn2015ancillary}, with a linear time-invariant system approximation of the derived aggregate nonlinear model. Flexibility in this work was defined as an estimate of the proportion of loads that are operating.  {Our work falls into this category, but differs from previous papers in that entropy maximization for a closed-loop control system yields an interpretable signal that can be informative for operator objectives in real-time, as well as guarantee feasibility of the private constraints of loads (if the signal is accurate).} In our previous work~\cite{li2021information}, we study the problem of real-time coordination of an aggregator and a system operator under the paradigm of a control framework and provide regret analysis assuming feasibility predictions are available.

\textit{Other approaches.} Beyond the works described above, there are many other suggestions for metrics of aggregate flexibility, e.g., graphical-based measures~\cite{kara2015estimating} and data-driven approaches~\cite{kara2015estimating}. 
Most of these, and the approaches described above, are evaluated on the aggregator side only, and much less attention has been paid to the question of real-time coordination between an ISO and an aggregator that controls decentralized loads. 

The assessment and enhancement of aggregate flexibility are often considered independent of the operational objectives. For instance, in a reserve market, an aggregator will report to the ISO a day in advance an offline notion of aggregated flexibility based on forecast for the ISO to compute a energy and reserve schedule for the following day, e.g.,~\cite{hao2014characterizing,chen2019aggregate,chen2018aggregating,madjidian2018energy}, with notable exceptions, such as~\cite{wenzel2017real}, which considered charging and discharging of EV fleets batteries for tracking a sequence of automatic generation control (AGC) signals. However, this approach has several limitations. First, in large-scale systems, knowing the exact states of each load is not realistic. Second, classical flexibility representations often rely on a precise state-transition model on the aggregator's side. Third, traditional ISO market designs, such as a day-ahead energy market, often make use of ex ante estimates of future system states. The forecasts of the future states can sometime be far from reality, because of either an inaccurate model is used, or an uncertain event occurs. In contrast, a real-time energy market~\cite{marzband2013experimental,siano2016assessing} provides more robust system control when facing uncertainty in the  environment, e.g., from fast-changing renewable resources or human behavioral parameters. This further highlights the need for real-time flexibility feedback, and serves to differentiate the approach in our paper. {Below we present the notation frequently used in the remainder of this paper.}

\section*{Nomenclature Table}
	\addcontentsline{toc}{section}{Nomenclature}
	\subsection{System Operator (\textit{Central Controller})}
	\begin{IEEEdescription}[\IEEEusemathlabelsep\IEEEsetlabelwidth{$\mathsf{U}_t$}]
	\item[$\nt$] Total number of time slots.
	\item[$t$] Time index.
	\item[$u_t$] Operator action.
	\item[$c_t$] Cost function.
	\item[$C_{\nt}$] Cumulative costs.
	\item[$\psi_t$] Operator function.
	\item[$\beta_t$] Tuning parameter.
	\end{IEEEdescription}

	\subsection{Aggregator (\textit{Local Controller})}
	\begin{IEEEdescription}[\IEEEusemathlabelsep\IEEEsetlabelwidth{$\mathsf{U}_t$}]
		\item[$x_t$] Aggregator state.
		\item[$p_t$] Real-time aggregate flexibility feedback.
		\item[$\mathsf{X}_t$] Set of feasible states.
		\item[$\mathsf{U}_t$] Set of feasible actions.
		\item[$\mathsf{S}$] Set of feasible  action trajectories.
		\item[$f_t$] State transition function.
        \item[$\mathsf{P}$] Set of flexibility feedback.
        \item[$\phi_t$] Aggregator function.
	\end{IEEEdescription}
	\subsection{EV Charging Example}
	\begin{IEEEdescription}[\IEEEusemathlabelsep\IEEEsetlabelwidth{$u_t(j)$}]
	\item[$\nn$] Total number of accepted charging sessions.
	\item [$j$] Index of charging sessions.
	\item[$u_t$] Aggregate substation power level.
	\item[$s_t$] Charging decision vector.
	\item[$s_t(j)$] Scheduled energy.
	\item[$a(j)$] Arrival time.
	\item[$d(j)$] Departure time.
	\item[$e(j)$] Total energy to be delivered.
	\item[$r(j)$] Peak charging rate.
	\item[$d_t(j)$] Remaining charging time.
	\item[$e_t(j)$] Remaining energy demand.
	\item[$\Delta$] Time unit.
	\end{IEEEdescription}

\textbf{Notation and Conventions. } 
We use $\mathbbm{P}\left(\cdot\right)$ and $\mathbbm{E}\left(\cdot \right)$ to denote the probability distribution and expectation of random variables. The (differential) entropy function is denoted by $\mathbbm{H}(\cdot)$. To distinguish random variables and their realizations, we follow the convention to denote the former by capital letters (e.g., $U$) and the latter by lower case letters (e.g., $u$). Furthermore, we denote the length-$t$ prefix of a vector $u$ by $u_{\leq t}:=(u_1,\ldots,u_{t})$. Similarly, $u_{<t}:=(u_1,\ldots,u_{t-1})$ and $u_{a\rightarrow b}:=(u_{a},\ldots,u_{b})$. The concatenation of two vectors $u$ and $v$ is denoted by $(u, v)$. Given two vectors $u,v\in\mathbbm{R}^{\nn}$, we write $ u\preceq v$ if $u_i\leq v_i$ for all $i=1,\ldots,\nn$. For $x\in\mathbbm{R}$, denote $[x]_+:=\max\{0,x\}$. {The set of non-negative real numbers is denoted by $\mathbb{R}_+$.} 

The rest of the paper is organized as follows. We present our closed-loop control model in Section~\ref{sec:model}. We define real-time aggregate flexibility, called the MEF, and prove its properties in Section~\ref{sec:feedback}. An RL-based approach for estimating the MEF is provided in Section~\ref{sec:learning}. Combining MEF and model MPC, we propose an algorithm, termed the PPC in Section~\ref{sec:PPC}. Numerical results are given in Section~\ref{sec:experiment}. Finally, we conclude this paper in Section~\ref{sec:conclusion}.

\section{Problem Formulation}
\label{sec:model}



In this paper, we consider a real-time control problem involving two parties -- a \textit{load aggregator} and an independent system operator (\textit{ISO}), or simply called an \textit{operator} that interact over a discrete time horizon $[T] := \{1, \dots, T\}$. 

\subsection{Load aggregator}

A \textit{load aggregator} is a device, often considered as a local controller that controls a fleet of controllable loads. In this part, we formally state the model of an aggregator and its objective.
Let $x_t$ denote the \emph{aggregator state} at time $t$ that takes value in a certain set $\mathsf{X}\subseteq\mathbb{R}^{\nm}$.
To this end, the aggregator receives an \textit{action} $u_t\in\mathsf{U}$ where {$\mathsf{U}\subseteq\mathbb{R}$ denotes a closed and bounded set of actions at each time $t$ from a \textit{system operator}}, which will be formally defined in Section~\ref{operator}.  
The action space $\mathsf{U}$ and state space $\mathsf{X}$ are prefixed and known as common knowledge to both the aggregator and the system operator. The goal of the aggregator is to accomplish a certain task over the horizon $[\nt]$, e.g., delivering energy to a set of EVs by their deadlines while minimizing the costs, subject to system constraints. Mathematically, the constraints are represented by two collections of \textit{time-varying} and \textit{time-coupling} sets $\{\mathsf{X}_t(x_{<t},u_{<t})\subseteq\mathsf{X}:t\in [\nt]\}$ and $\{\mathsf{U}_t(x_{<t},u_{<t})_t\subseteq\mathsf{U}:t\in [\nt]\}$. For notational simplicity, we denote  $\mathsf{X}_t(x_{<t},u_{<t})$ by $\mathsf{X}_t$ and $\mathsf{U}_t(x_{<t},u_{<t})$ by $\mathsf{U}_t$ in the remaining contexts. The states and actions must satisfy $x_t\in\mathsf{X}_t$ and $u_t\in\mathsf{U}_t$ for all $t\in [\nt]$.
The decision changes the aggregator state $x_t$ according to a \textit{state transition function} $f_t$:
\begin{align}
    \label{eq:system}
    x_{t+1} = f_t(x_t,u_t),
    \ \ &x_{t} \in \mathsf{X}_t, \ 
    u_{t}\in\mathsf{U}_t,\end{align}
where $f_t$ represents the transition of the state $x_t$. The initial state $x_1$ is assumed to be the origin without loss of generality. The aggregator state $x_t$ and decision $u_t$ need to be chosen from two time-varying sets $\mathsf{X}_t$ and $\mathsf{U}_t$.
{ We make the following model assumptions:
\begin{assumption}
\label{ass:1}
{The dynamic $f_t(\cdot,\cdot):\mathsf{X}_{t}\times\mathsf{U}_{t}\rightarrow\mathsf{X}_{t+1}$ is a Borel measurable function for $t\in [\nt]$.} {The time-varying and time-coupling sets $\{\mathsf{U}_t:t\in [\nt]\}$ and  $\{\mathsf{X}_t:t\in [\nt]\}$ are Borel sets in $\mathbb{R}$ and $\mathbb{R}^{\nm}$.}
\end{assumption}}

The aggregator has flexibility in its actions $u_t$ for accomplishing its task and, we assume for this paper, is indifferent to these decisions as long as the task is accomplished by time $T$.
{At each time $t$, based on its current state $x_t$, the aggregator needs to send \textit{flexibility feedback}, $p_t$, a probability density function, from a collection of feedback signals $\mathsf{P}$, to the system operator, which describes the flexibility of the aggregator for accepting different actions $u_t$. We formally define $p_t$ and $\mathsf{P}$ in Section~\ref{sec:def_of_feedback}.} Designing $p_t$ is one of the central problems considered in this paper { (see Section~\ref{sec:feedback} for more details)}. Below we state the aggregator's goal in the real-time control system.

\begin{aggregator}
The goal of the aggregator is two-fold: (1). {Maintain the feasibility of the system} and guarantee that $x_t\in\mathsf{X}_t$ and $u_t\in\mathsf{U}_t$ for all $t\in [\nt]$. (2). Generate flexibility feedback $p_t$ and send it to the operator at time $t\in [\nt]$.
\end{aggregator}

\begin{remark}
\label{remark:action_space}
\emph{{We assume that the action space $\mathsf{U}$ is a continuous set in $\mathbb{R}$ only for simplicity of presentation. The results and definitions in the paper can be extended to discrete setting by changing the integrals to summations, and replacing the differential entropy functions by discrete entropy functions, e.g., see the definition of maximum entropy feedback (Definition~\ref{def:MEF}) and Lemma~\ref{lemma:explicit}. In practical systems e.g., an electric system consisting of an EV aggregator and an operator, $\mathsf{U}$ often represents the set of power levels and when the gap between power levels is small, $\mathsf{U}$ can be modeled as a continuous set.}}
\end{remark}

\begin{figure*}[htbp]
	\centering
	\includegraphics[scale=0.25]{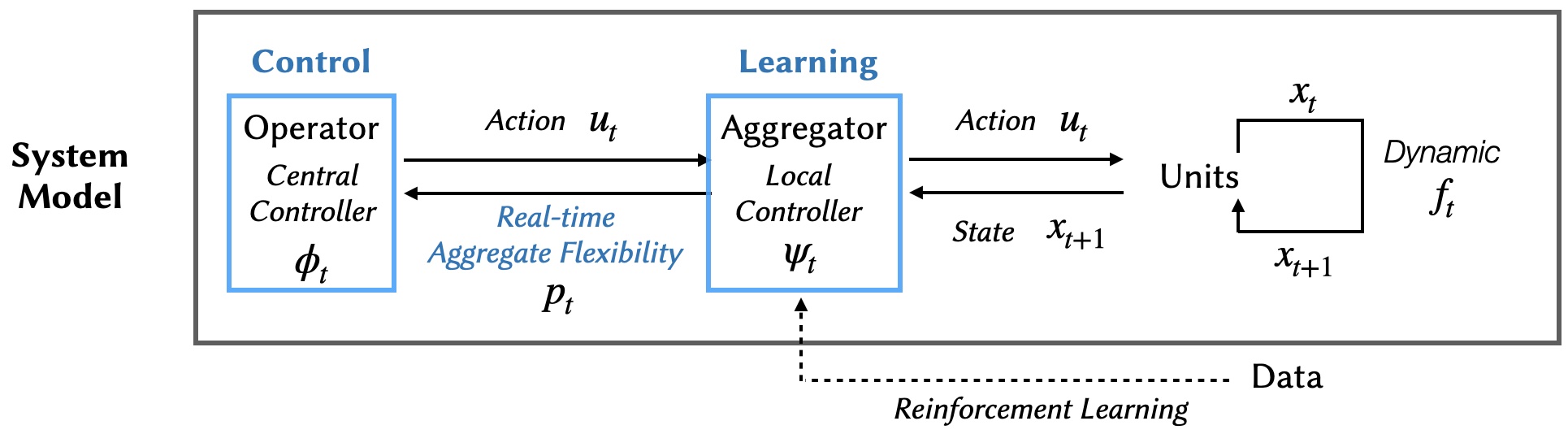}
	\caption{{System model: A feedback control approach for solving an online 
	version of \eqref{eq:offline}. The operator implements a control algorithm and the aggregator uses reinforcement learning to generate real-time aggregate flexibility feedback.}}
	\label{fig:framework}
	\medskip
\end{figure*}

\subsection{System operator}
\label{operator}

A system operator is a central controller that operates the power grid.
Knowing the flexibility feedback $p_t$ from the aggregator, the operator sends an action $u_t$, chosen from $\mathsf{U}$ to the aggregator at each time $t\in [\nt]$. Each action is associated with a cost function $c_t(\cdot):\mathsf{U}\rightarrow\mathbbm{R}_+$, e.g., the aggregate EV charging rate increases
load on the electricity grid. The system's objective is stated as follows.



\begin{operator}
The goal of the system operator is to provide an action $u_t\in\mathsf{U}$ at time $t\in [\nt]$ to the aggregator so as to minimize the cumulative system costs given by $C_\nt(u_1,\ldots,u_{\nt}) :=\sum_{t=1}^{\nt}c_t(u_t)$.
\end{operator}


\subsection{Real-time operator-aggregator coordination}
\label{sec:coordination}
Overall, considering the aggregator and operator's objectives, the goal of the  closed-loop system is to solve the following problem in \textit{real-time}, by coordinating the operator and aggregator via $\{p_t:t\in [\nt]\}$ and $\{u_t:t\in [\nt]\}$:
\begin{subequations}
\begin{align}
\label{eq:offline_1}
\min_{u_1,\ldots,u_{\nt}} & C_\nt(u_1,\ldots,u_{\nt})\\
\nonumber
\text{subject to } \forall t&=1,\ldots, \nt:\\
\label{eq:offline_2}
x_{t+1} &= f_t(x_t,u_t)\\
x_t &\in \mathsf{X}_t,\\
\label{eq:offline_3}
u_t &\in\mathsf{U}_t
\end{align}
i.e., 
\label{eq:offline}
\end{subequations}
the operator aims to minimize its cost $C_{\nt}$ in~\eqref{eq:offline_1} while the load aggregator 
needs to fulfill its obligations in the form of constraints 
\eqref{eq:offline_2}-\eqref{eq:offline_3}.  
This is an offline problem that involves global information at all
times $t\in [\nt]$. 

\begin{remark}
\label{remark:online}
\emph{For simplicity, we describe our model in an offline setting where 
the cost and the constraints in the optimization
problem \eqref{eq:offline} 
are expressed in terms of the entire trajectories of states and actions. The goal of the closed-loop control system is, however, to solve an online optimization via operator-aggregator coordination.} 
\end{remark}

The challenges are: (i) the aggregator and operator need to solve the online version of~\eqref{eq:offline} jointly, and (ii) the cost function $C_{\nt}$ is private to the operator and the constraints \eqref{eq:offline_2}-\eqref{eq:offline_3} are private to the operator.  
It is impractical for the aggregator to
communicate the constraints
to the operator because of privacy concerns or computational effort. Moreover, in an online setting, even the aggregator will not know the  constraints that involve future information, e.g., future EV arrivals in an EV charging station. { Formally, at each time $t\in [\nt]$, we assume that the operator and aggregator have access to the following information respectively:
\begin{enumerate}
    \item \textit{An operator knows the costs $(c_1,\ldots,c_t)$ and feedback $(p_1,\ldots,p_t)$, but not the future costs  $(c_{t+1},\ldots,c_{\nt})$ and feedback $(p_{t+1},\ldots,p_{\nt})$.}
    \item \textit{An aggregator knows the state transition functions $(f_1,\ldots,f_{\nt})$, { the initial state $x_1$ }and actions $(u_1,\ldots,u_{t})$.}
\end{enumerate}

{
\begin{system}
Overall, the goal of a aggregator-operator system is to jointly solve the online version of~\eqref{eq:offline_1}-\eqref{eq:offline_3} whose partial information is known to an aggregator and an operator respectively.
\end{system}}

{}

\subsection{Necessities of combining learning and control}

With the assumptions above, on the one hand the aggregator cannot solve the problem independently because it does not have cost information (since the costs are often sensitive and only of the operator's interests) from the operator and even if the aggregator could, it may not have enough power to solve an optimization to obtain an action.  On the other hand, the operator has to receive flexibility information from the aggregator in order to act. Well-known methods in pure learning or control cannot be used for this problem directly. From a learning perspective, the aggregator cannot simply use reinforcement learning and transmit parameters of a learned Q-function or an actor-critic model to the operator because the aggregator does not know the costs. From a control perspective, although model predictive control (MPC) is widely used for EV charging scheduling in practical charging systems ~\cite{lee2018large, lee2019acn}, it requires precise state information of electric vehicle supply equipment (EVSE). Thus, to solve the induced MPC problem, the system operator or aggregator needs to solve an online optimization at each time step that involves hundreds or even thousands of variables. This not just a complex problem, but the state information of the controllable units is potentially sensitive. This combination makes controlling sub-systems using precise information impractical for a future smart grid~\cite{li2020real,li2021information}
In this work, we explore a solution where the system operator and the 
aggregator jointly solve an online version of \eqref{eq:offline} in
a closed loop, as illustrated in Figure~\ref{fig:framework}.

The real-time operator-aggregator coordination illustrated in Figure~\ref{fig:framework} combines learning and control approaches. It does not require the aggregator to know the system
operator's objective in \eqref{eq:offline_1}, but only the
action $u_t$ at each time {$t\in [\nt]$} from the operator.
In addition, it does not require the system operator to know the aggregator 
constraints in \eqref{eq:offline_2}, but only a feedback signal 
$p_t$ (to be designed) from the aggregator. After receiving flexibility feedback $p_t$, which could be generated by machine learning algorithms, the system
operator outputs an action $u_t$ using a causal \textit{operator function} $\phi_t(\cdot):\mathsf{P}\rightarrow\mathsf{U}$. Knowing the state $x_{t}$,
the aggregator generates its feedback $p_t$ using a
causal \textit{aggregator function}  $\psi_t(\cdot):\mathsf{X}\rightarrow\mathsf{P}$ where $\mathsf{P}$ denotes the domain of flexibility feedback that will be formally defined in Section~\ref{sec:def_of_feedback}. By an ``online feedback'' solution, we mean
that these functions $(\phi_t, \psi_t)$ use {only information available 
locally at time $t\in [\nt]$}.

In summary, the closed-loop control system in our model proceeds as follows.  At each time $t$, the aggregator learns or computes a length-$|\mathsf{U}|$ vector $p_t$
based on previously received action
trajectory ${u}_{<t}=(u_1,\ldots,u_{t-1})$, and sends it to
the system operator.\footnote{We will omit ${u}_{<t}$ in the notation when it is not 
essential to our discussion and simplify the probability vector as $p_t$. Note that in~\eqref{eq:af3} we slightly abuse the notation and use $p_t$ to denote a conditional distribution. This is only for computational purposes and the information sent from an aggregator to an operator at time $t\in [\nt]$ is still a length-$|\mathsf{U}|$ probability vector, conditioned on a fixed $u_{<t}$.} 
The system operator thencomputes a (possibly random) action
$u_t  =  \phi_t(p_t)$
based on the flexibility feedback $p_t$ and sends it to the aggregator. 
The operator chooses its signal $u_t$ in order to solve the time-$t$ problem
in an online version of \eqref{eq:offline}, so the function $\phi_t$ denotes
the mapping from the flexibility feedback $p_t$ to an optimal solution
of the time-$t$ problem. See~\ref{sec:PPC} for examples.
The aggregator then computes the next feedback $p_{t+1}$ and the cycle repeats; see Algorithm~\ref{alg:online_control}. The goal of this paper is to provide concrete constructions of an aggregator function $\psi$ (as an MEF generator; see Section~\ref{sec:feedback}) and an operator function $\phi$ (via the PPC scheme; see Section~\ref{sec:PPC}).

\begin{algorithm}[t!]
\small
	\hrule
	\hrule
	\medskip
 		\For{$t\in [\nt]$}{
 				
	\textit{Operator {(Central Controller)}}	
  		
  	\quad	Generate actions using the PPC:
  		\begin{align*}
  		{u}_{t} &= \phi_t\left(p_t\right)\\
  		C_t &= C_{t-1} + c_t(u_t)
  		\end{align*}
  				
      	\textit{Aggregator {(Local Controller)}}
       	
      \quad 	Update system state:
      	\begin{align*}
      	    x_{t+1} = f_t(x_t,u_t)
      	\end{align*}
       	
      \quad  Compute estimated MEF:
         \begin{align*}
             p_{t+1} = \psi_t(x_{t+1})
         \end{align*}
        }
    
{Return {Total cost $C_{\nt}$}\;}
	\hrule
	\hrule
	\medskip
\caption{Closed-loop online control framework of a system operator (central controller) and an aggregator (local controller).}
	\label{alg:online_control}
\end{algorithm}

In the sequel, we demonstrate our system model using an EV charging application, as an example of the problem stated in~\eqref{eq:offline}.

\subsection{{An EV Charging Example}}
\label{sec:example}

Consider an aggregator that is an EV charging facility with $\nn$ accepted users. 
Each user $j$ has a private vector $\left(a(j), d(j),e(j),r(j)\right)\in\mathbbm{R}^4$ where $a(j)$ denotes its arrival (connecting) time; $d(j)$ denotes its departure (disconnecting) time, normalized according to the time indices in $[\nt]$; $e(j)$ denotes the total energy to be delivered, and $r(j)$ is its peak charging rate. 
Fix a set of $\nn$ users with their private vectors $\left(a(j), d(j),e(j),r(j)\right)$, 
the \textit{aggregator state} $x_t$ at time $t\in [\nt]$ is a collection of
length-$2$ vectors $(d_t(j), e_t(j) : a(j) \leq t \leq d(j))$ for each EV that
has arrived and has not departed by time $t$.  
Here $e_t(j)$ is the remaining energy 
demand of user $j$ at time $t$ and $d_t(j)$ is the remaining charging time.
The decision $s_t(j)$ is the energy delivered to each user $j$ at time $t$, determined by a \textit{scheduling policy} $\pi_t$ such as the well-known earliest-deadline-first, least-laxity-first, etc. Let $s_t:=(s_t(1),\ldots,s_t(\nn))$ and we have $s_t = \pi_t(u_t)$ where $u_t$ in this example is the aggregate substation power level, chosen from a small discrete set $\mathsf{U}$.
The aggregator decision $s_t(j) \in{\mathbbm{R}}_{+}$ at each time $t$
updates the state, in particular $e_t(j)$ such that
\begin{subequations}
\begin{align}
\label{eq:ev_transition_a}
    e_t(j) &= e_{t-1}(j) - s_t(j)\\
    \label{eq:ev_transition_b}
      d_t(j) & = d_{t-1}(j)  - \Delta
\end{align}
\end{subequations}
where $\Delta$ denotes the time unit and we assume that there is no energy loss. The laws~\eqref{eq:ev_transition_a}-\eqref{eq:ev_transition_b} are examples of the generic transition functions $f_1,\ldots,f_{\nt}$ in~\eqref{eq:system}.

Suppose, in the context of demand response, the system operator 
(a local utility company, or a building management) sends
a signal $u_t$ that is the aggregate energy that can be allocated to EV charging.
The aggregator makes charging decisions $s_t(j)$ to track the signal $u_t$ received from the system operator as long as they will meet the energy demands 
of all users before their deadlines. 
Then the constraints in~\eqref{eq:offline_2}-\eqref{eq:offline_3} are the following constraints on 
the charging decisions $s_t$, as a function of $u_t$:
\begin{subequations}
\begin{align}
\label{eq:f1}
s_{t}(j)= 0 \ , \  t<a(j), & \ j=1,\ldots,\nn,\\
\label{eq:f5}
s_{t}(j)= 0\ , \ t>d(j), & \  j=1,\ldots,\nn,\\
\label{eq:f3}
\sum_{j=1}^{\nn}s_{t}(j)= u_{t}, & \ t=1,\ldots,\nt,\\
\label{eq:f2}
\sum_{t=1}^{\nt}s_{t}(j) = e(j), & \ j=1,\ldots,\nn,\\
\label{eq:f4}
0\leq s_{t}(j)\leq r(j), & \ t=1,\ldots,\nt
\end{align}
\label{eq:f123}
\end{subequations}
{In above, constraint \eqref{eq:f3} ensures that the aggregator decision $s_t$ tracks the signal $u_{t}$ at each time $t\in [\nt]$,
the constraint (\ref{eq:f2}) guarantees that EV $j$'s energy demand is satisfied,  and the other constraints say that the aggregator cannot charge an EV before its arrival, after its departure, or at a rate that exceeds its limit. Inequalities~\eqref{eq:f1}-\eqref{eq:f4} above are examples of the constraints in~\eqref{eq:system}. Together, for this EV charging application,~\eqref{eq:ev_transition_a}-\eqref{eq:ev_transition_b} and~\eqref{eq:f1}-\eqref{eq:f4} exemplify the dynamic system in~\eqref{eq:system}.}

{The system operator's objective to minimize the cumulative costs $C_{\nt}(u):=\sum_{t=1}^{\nt}c_t u_t$ where $u=(u_1,\ldots,u_{\nt})$ are substation power levels, as outlined in Section~\ref{operator}. The cost $c_t$ depends on multiple factors such as the electricity prices and injections from an installed rooftop solar panel. Overall, the EV charging problem is formulated below, as a specific example of the generic optimization~\eqref{eq:offline_1}-\eqref{eq:offline_3}: 
\begin{subequations}
\begin{align}
\label{eq:ev_1}
\min_{u_1,\ldots,u_{\nt}} & \sum_{t=1}^{\nt}c_t u_t\\
\label{eq:ev_2}
\eqref{eq:ev_transition_a}-\eqref{eq:ev_transition_b} \ &\text{and} \ \eqref{eq:f1}-\eqref{eq:f4}
\end{align}
\label{eq:ev}
\end{subequations}

}

\section{Definitions of Real-time Aggregate Flexibility: Maximum Entropy Feedback}
\label{sec:feedback}

In this section we propose a specific function $\psi_t$
in the class defined by \eqref{eq:psit.1} for computing
flexibility feedback to quantify its future flexibility.  
We will justify our proposal by showing that the proposed $\psi_t$ 
has several desirable properties for solving an online 
version of~\eqref{eq:offline} using the real-time feedback-based approach described in Section~\ref{sec:model}.

\subsection{Definition of Flexibility Feedback $p_t$}
\label{sec:def_of_feedback}

A major challenge in our problem is that the operator has access to neither the feasible set nor the dynamics directly. Therefore, a notion termed  \textit{aggregate flexibility} has to be designed. It is often a ``simplified'' summary of the constraints in~\eqref{eq:offline_2}-\eqref{eq:offline_3}, as we reviewed in Section~\ref{sec:literature}. Notably, existing aggregate flexibility definitions (for instance, in~\cite{hao2014characterizing,hao2014aggregate,sajjad2016definitions,zhao2017geometric,madjidian2018energy,chen2018aggregating,sadeghianpourhamami2018quantitive,evans2019graphical}) all focus on the offline version of~\eqref{eq:offline}. It remains unclear that first, \textit{what is the right notion of \textit{real-time aggregate flexibility}? i.e., what is the right form of the flexibility feedback $p_t$?} Second, \textit{how can this $p_t$  be used by an operator?}

{In the following, we present a design of the flexibility feedback $p_t$, which is first proposed in our previous work~\cite{li2020real} for discrete $\mathsf{U}$ and~\cite{li2021information} for continuous $\mathsf{U}$. It
quantifies future flexibility that will be enabled by an operator 
action $u_t$.} The feedback $p_t$ therefore is a surrogate for the
aggregator constraints \eqref{eq:offline_2} to guide the operator's decision. Let $u:=(u_1,\ldots,u_{\nt})$.
Specifically, define the set of all \textit{feasible action trajectories} for the aggregator 
as:
\begin{align*}
\mathsf{S}:=\left\{u\in\mathsf{U}^{\nt}: u\text{ satisfies } \eqref{eq:offline_2}-\eqref{eq:offline_3} \right\}.
\end{align*}

{
The following property of the set $\mathsf{S}$ is useful, whose proof can found in Appendix~\ref{app:measurability}.
\begin{lemma}
\label{lemma:measurability}
The set of feasible action trajectories $\mathsf{S}$ is Borel measurable.
\end{lemma}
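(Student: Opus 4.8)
The plan is to write membership in $\mathsf{S}$ as a finite conjunction of conditions, each of which carves out a Borel subset of $\mathsf{U}^{\nt}$, and then invoke the fact that a finite intersection of Borel sets is Borel. The first step is to eliminate the state variables. Because $x_1$ is fixed (the origin) and the dynamics \eqref{eq:offline_2} are deterministic, along any trajectory the state $x_t$ is a function of the past actions alone. Concretely, extend each $f_t$ from $\mathsf{X}_t\times\mathsf{U}_t$ (a Borel subset of $\mathsf{X}\times\mathsf{U}$ by Assumption~\ref{ass:1}) to a Borel measurable map $\bar f_t:\mathsf{X}\times\mathsf{U}\to\mathsf{X}$ by setting it equal to an arbitrary constant off $\mathsf{X}_t\times\mathsf{U}_t$, and define $g_1\equiv x_1$, $g_{t+1}(u_{\leq t}):=\bar f_t\bigl(g_t(u_{<t}),u_t\bigr)$. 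Since $f_t$, hence $\bar f_t$, is Borel by Assumption~\ref{ass:1}, and the composition of Borel measurable maps between Borel subsets of Euclidean spaces is again Borel measurable, an induction on $t$ shows each $g_t:\mathsf{U}^{t-1}\to\mathsf{X}$ is Borel measurable. Moreover, whenever all of $x_1\in\mathsf{X}_1$, $u_s\in\mathsf{U}_s$ and $g_s(u_{<s})\in\mathsf{X}_s$ hold for $s<t$, the extension agrees with the true dynamics at the relevant points, so the set defined below using $g_t$ coincides with $\mathsf{S}$.

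Next I would handle the constraints. The equalities \eqref{eq:offline_2} hold by construction of $g_t$, so the only remaining requirements for $u\in\mathsf{S}$ are $g_t(u_{<t})\in\mathsf{X}_t$ and $u_t\in\mathsf{U}_t$ for $t\in[\nt]$, where $\mathsf{X}_t=\mathsf{X}_t(x_{<t},u_{<t})$ and $\mathsf{U}_t=\mathsf{U}_t(x_{<t},u_{<t})$ with $x_{<t}=(g_1(u_{<1}),\ldots,g_{t-1}(u_{<t-1}))$ itself a Borel function of $u_{<t}$. Reading Assumption~\ref{ass:1} in the natural way, i.e.\ that the graphs
\begin{align*}
\mathcal{G}^{\mathsf{X}}_t &:= \{(z,w,y): y\in\mathsf{X}_t(z,w)\}, & \mathcal{G}^{\mathsf{U}}_t &:= \{(z,w,v): v\in\mathsf{U}_t(z,w)\}
\end{align*}
are Borel in the relevant Euclidean spaces, consider the Borel map $\Phi_t:\mathsf{U}^{t}\to\mathsf{X}^{t-1}\times\mathsf{U}^{t-1}\times\mathsf{X}\times\mathsf{U}$ sending $u_{\leq t}$ to $\bigl(x_{<t},u_{<t},g_t(u_{<t}),u_t\bigr)$; it is Borel measurable since it is assembled from the $g_i$'s and coordinate projections. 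Then $\{u_{\leq t}:g_t(u_{<t})\in\mathsf{X}_t\}=\Phi_t^{-1}(\mathcal{G}^{\mathsf{X}}_t)$ and $\{u_{\leq t}:u_t\in\mathsf{U}_t\}=\Phi_t^{-1}(\mathcal{G}^{\mathsf{U}}_t)$ are Borel. Pulling these back along the continuous projection $\mathsf{U}^{\nt}\to\mathsf{U}^{t}$ preserves Borel-ness, and $\mathsf{S}$ is the intersection over $t\in[\nt]$ of these $2\nt$ Borel subsets of $\mathsf{U}^{\nt}$, hence Borel.

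The main obstacle is not any deep argument but pinning down the measurability hypothesis: the proof needs the set-valued maps $(x_{<t},u_{<t})\mapsto\mathsf{X}_t$ and $(x_{<t},u_{<t})\mapsto\mathsf{U}_t$ to have \emph{Borel graphs}, which is the right interpretation of the "Borel sets" clause of Assumption~\ref{ass:1}; merely assuming each individual section $\mathsf{X}_t(x_{<t},u_{<t})$ is Borel would be insufficient, since a set-valued map with all sections Borel can fail to have a measurable graph. Once the graph-measurability reading is in place, everything reduces to the composition-of-Borel-maps and finite-intersection steps above, and no descriptive-set-theoretic subtlety (projections of Borel sets, analytic sets, universal measurability) arises, precisely because the state is a genuine function of the past actions rather than an existentially quantified variable.
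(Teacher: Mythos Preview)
Your proposal is correct and follows essentially the same approach as the paper: eliminate the state variables by expressing $x_t$ as a Borel function of $u_{<t}$ via the deterministic dynamics, then write $\mathsf{S}$ as a finite intersection of Borel preimages. Your version is in fact more careful than the paper's own proof, which glosses over exactly the graph-measurability issue you flag (the paper asserts that ``the inverse image $f^{-1}(\mathsf{X}_t)$ is also a Borel set'' without distinguishing between sectionwise and graphwise Borel-ness of the set-valued constraints).
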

}

Existing aggregate flexibility  definitions focus on approximating $\mathsf{S}$ such as finding its convex approximation (see Section~\ref{sec:literature} for more details). Our problem formulation needs a \textit{real-time} approximation of this set $\mathsf{S}$, i.e., decompose  $\mathsf{S}$ along the time axis $t=1,\ldots,\nt$.
Throughout, we assume that $\mathsf{S}$ is non-empty. 
Next, we define the space of flexibility feedback $p_t$.
Formally, we 
let $\mathsf P$ denote a set of density functions $p_t(\cdot):\mathsf{U}\rightarrow [0,1]$ that maps an action to a value in $[0,1]$ and satisfies
\begin{align*}
    \int_{u\in\mathsf{U}} p(u)\mathrm{d}u = 1.
\end{align*}

Fix $x_t$ at time $t\in [\nt]$. The aggregator function $\psi_t(\cdot): \mathsf{X} \rightarrow\mathsf{P}$ at each time $t$ outputs: 
\begin{align}
\label{eq:psit.1}
\psi_t(x_t) = p_t(\cdot|u_{<t})
\end{align}
such that $p_t(\cdot|u_{<t}):\mathsf{U}\rightarrow [0,1]$ is a conditional density function in $\mathsf{P}$.
We refer to $p_t$ as \emph{flexibility feedback} sent at time 
$t\in [\nt]$ from the aggregator to the system operator.
In this sense, \eqref{eq:psit.1} does not specify a specific
aggregator function $\psi_t$, but a class of possible functions
$\psi_t$.
Every function in this collection is \emph{causal} in that it
depends only on information available to the aggregator at time
$t$.
In contrast to most aggregate flexibility notions in the literature~\cite{hao2014characterizing,hao2014aggregate,sajjad2016definitions,zhao2017geometric,madjidian2018energy,chen2018aggregating,sadeghianpourhamami2018quantitive,evans2019graphical}, the 
flexibility feedback  here 
is specifically designed for an online feedback control setting.

\subsection{Maximum entropy feedback}

The intuition behind our proposal is using the conditional
probability $p_t(u_t|u_{<t})$ to measure 
the resulting future flexibility of the aggregator
if the system operator chooses $u_t$ as the signal at 
time $t$, given the action trajectory up to time $t-1$.
The sum of the conditional entropy of $p_t$ thus is a 
measure of how informative the overall feedback is.
This suggests choosing a conditional distribution $p_t$ that 
maximizes its conditional entropy.
Consider the optimization problem:
\begin{subequations}
\begin{align}
\label{eq:af1}
{\digamma} \ := \ \max_{p_1,\ldots,p_{\nt}}\ \sum_{t=1}^{\nt}\mathbbm{H}\left({U}_t|U_{<t}\right)\ 
    \text{subject to} \ U\in\mathsf{S}
\end{align}
where the variables are conditional density functions:
\begin{align}
p_t & \ := \ p_t(\cdot|\cdot):=\mathbbm{P}_{U_t|U_{<t}}(\cdot|\cdot),
\qquad t \in [\nt],
\label{eq:af2}
\end{align}
$U\in \mathsf U$ is a random variable distributed according to the joint distribution $\prod_{t=1}^{\nt}p_t$ and $\mathbbm{H}\left({U}_t|U_{<t}\right)$ is the differential conditional entropy of $p_t$ defined as:
\begin{align}
\nonumber
\mathbbm{H}\left({U}_t|U_{<t} \right):= 
\int_{u_{\leq t}\in\mathsf{U}^{t}}\Big(-\prod_{\ell=1}^{t}&p_\ell(u_\ell|u_{<\ell})\Big)\\
 \label{eq:af3}
& \ \log{p_t(u_{t}|u_{<t})}\mathrm{d}u_{\leq t}.
\end{align}
By definition, a quantity conditioned on ``$u_{<1}$'' means an unconditional 
quantity, so in the above, $\mathbbm{H}\left({U}_1|U_{<1} \right) := 
\mathbbm{H}\left({U}_1\right) := \mathbbm{H}\left({p}_1\right)$.

The chain rule shows that $\sum_{t=1}^{\nt}\mathbbm{H}\left({U}_t|U_{<t}\right) = \mathbbm{H}\left(U\right)$.
Hence \eqref{eq:af} can be interpreted as maximizing the entropy
$\mathbbm{H}\left(U\right)$ of a random trajectory $U$ sampled according 
to the joint distribution $\prod_{t=1}^{\nt}p_t$, conditioned on $U$ satisfying $U\in\mathsf{S}$, where the maximization is over the collection
of conditional distributions $(p_1, \dots, p_T)$.

\label{eq:af}
\end{subequations}
\begin{definition}[Maximum entropy feedback]
\label{def:MEF}
The flexibility feedback $p_t^* = \psi^*_t(u_{<t})$ for $t\in [\nt]$
is called the maximum entropy feedback (MEF) if
$(p_1^*, \dots, p_T^*)$ is the unique optimal solution of \eqref{eq:af}.
\end{definition}

\begin{remark}
\emph{Even though the optimization problem \eqref{eq:af} involves variables $p_t$
for the entire time horizon $[\nt]$, the individual variables
$p_t$ in \eqref{eq:af3} are conditional probabilities that depend
only on information available to the aggregator at times $t$.
Therefore the maximum entropy feedback $p^*_t$ in Definition \ref{def:MEF}
is indeed causal and in the class of $p^*_t$ defined in \eqref{eq:psit.1}.
The existence of $p^*_t$ is guaranteed by Lemma~\ref{lemma:explicit}
below, which also implies that $p^*_t$ is unique.}
\end{remark}

We demonstrate Definition \ref{def:MEF} using a toy example.
\begin{example}[Maximum entropy feedback $p^*$]
\label{example:toy}
Consider the following instance of the EV charging example in Section~\ref{sec:example}. Suppose the number of charging time slots is $\nt=3$ and there is one customer, whose private vector is $(1,3,1,1)$ and possible energy levels are $0$ (kWh) and $1$ (kWh), i.e., $\mathsf{U}\equiv\{0,1\}$. Since there is only one EV, the scheduling algorithm $u$ (disaggregation policy) assigns all power to this single EV. For this particular choices of $x$ and $u$, the set of feasible trajectories is $\mathsf{S}=\{(0,0,1),(0,1,0), (1,0,0)\}$, shown in Figure~\ref{fig:toy_example} with the corresponding optimal conditional distributions given by~\eqref{eq:af}. 
\begin{figure}[h]
    \centering
    \includegraphics[scale=0.35]{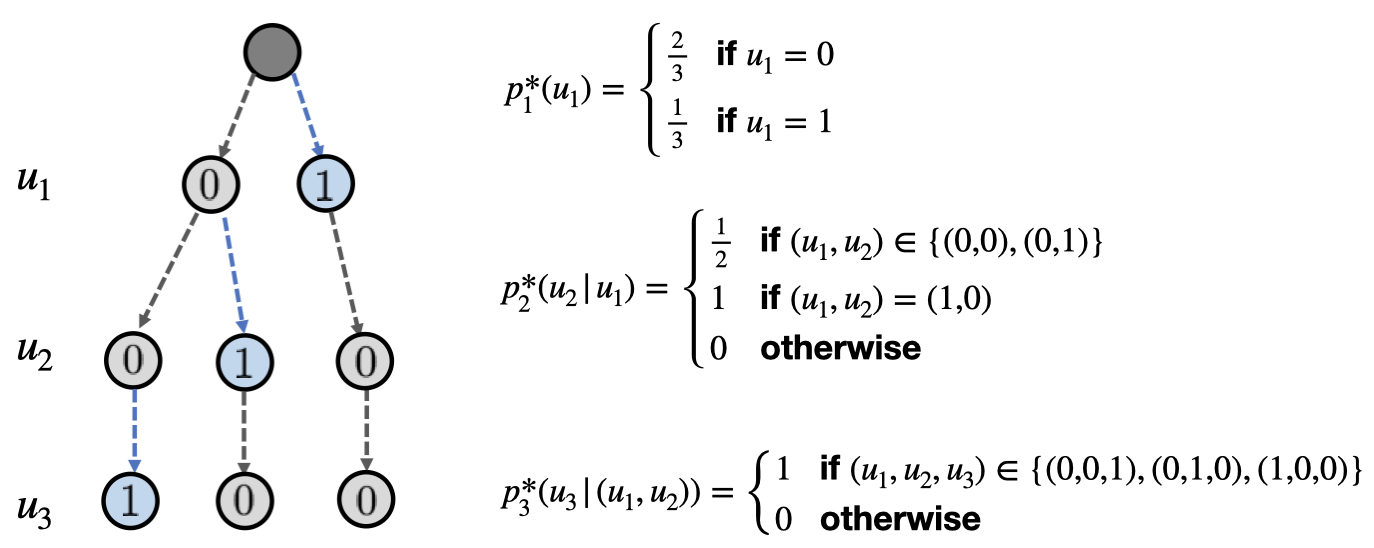}
    \caption{Feasible trajectories of power signals and the computed maximum entropy feedback in Example~\ref{example:toy}.}
    \label{fig:toy_example}
\end{figure}
\end{example}

\subsection{Properties of $p^*_t$}
\label{subsec:properties}

We now show that the proposed maximum entropy feedback $p^*_t$ has several desirable properties.
We start by computing $p^*_t$ explicitly. 
Given any action trajectory $u_{\leq t}$, define the 
set of \emph{subsequent} feasible trajectories as:
\begin{align*}
\mathsf{S}(u_{\leq t})
:=& \Big\{v_{>t}\in\mathsf{U}^{\nt-t}: v\text{ satisfies } \eqref{eq:offline_2}-\eqref{eq:offline_3}, v_{\leq t} = u_{\leq t} \Big\}.
\end{align*}
As a corollary 
The size 
$\left|\mathsf{S}(u_{\leq t})\right|$ of the set of subsequent
feasible trajectories is a measure of future flexibility, conditioned on
$u_{\leq t}$. Our first result justifies our calling $p^*_t$ the optimal
flexibility feedback: $p^*_t$ is a measure of the future flexibility that will be
enabled by the operator's action $u_t$ and it attains a measure of
system capacity for flexibility.
By definition, 
$p_1^*(u_1|u_{<1}) \ := \ p_1^*(u_1)$. 
{

\begin{lemma}
\label{lemma:explicit}
Let $\mu(\cdot)$ denote the Lebesgue measure. 
The MEF as optimal solutions of the maximization in (\ref{eq:af1})-(\ref{eq:af3}) are given by
\begin{align}
\label{eq:optimal}
p_t^*(u|u_{<t}) \equiv& \frac{\mu\left(\mathsf{S}((u_{<t}, u))\right)}{\mu\left(\mathsf{S}(u_{<t})\right)},
\quad\forall (u_{< t}, u_t) \in\mathsf{U}^{t}.
\end{align}
Moreover, the optimal value of~\eqref{eq:af1}-\eqref{eq:af3} is equal to $\log \mu(\mathsf{S})$.
\end{lemma}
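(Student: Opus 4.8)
The plan is to recognize \eqref{eq:af1}--\eqref{eq:af3} as a maximum-entropy problem over joint densities supported on $\mathsf{S}$, and then to read off the conditional densities of the unique maximizer. First I would invoke the chain rule for differential entropy (already used right after \eqref{eq:af}) to rewrite the objective as $\sum_{t=1}^{\nt}\mathbbm{H}(U_t|U_{<t}) = \mathbbm{H}(U)$, where $U=(U_1,\dots,U_{\nt})$ has joint density $q:=\prod_{t=1}^{\nt}p_t$ on $\mathsf{U}^{\nt}\subseteq\mathbbm{R}^{\nt}$. The constraint $U\in\mathsf{S}$ is equivalent to $q$ vanishing (a.e.) outside the Borel set $\mathsf{S}$ (Borel by Lemma~\ref{lemma:measurability}), so \eqref{eq:af} becomes: maximize $\mathbbm{H}(q)=-\int_{\mathsf{S}}q\log q$ over probability densities $q$ supported on $\mathsf{S}$.

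Next I would solve this max-entropy problem. Since $\mathsf{U}$ is closed and bounded, $\mathsf{S}\subseteq\mathsf{U}^{\nt}$ has finite Lebesgue measure, and I take $\mu(\mathsf{S})>0$ as part of the standing regularity of the continuous model (in the discrete case of Remark~\ref{remark:action_space} one replaces $\mu$ by counting measure throughout). For any density $q$ supported on $\mathsf{S}$, writing $q^*:=\mathbbm{1}_{\mathsf{S}}/\mu(\mathsf{S})$ for the uniform density, non-negativity of the Kullback--Leibler divergence (equivalently Jensen's inequality applied to $-\log$) gives
\begin{align*}
0 \;\leq\; \int_{\mathsf{S}} q\log\frac{q}{q^*} \;=\; -\,\mathbbm{H}(q) + \log\mu(\mathsf{S}),
\end{align*}
so $\mathbbm{H}(q)\leq\log\mu(\mathsf{S})$, with equality if and only if $q=q^*$ almost everywhere. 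Hence the optimal value of \eqref{eq:af1}--\eqref{eq:af3} is $\digamma=\log\mu(\mathsf{S})$ and the unique maximizing joint density is $q^*=\mathbbm{1}_{\mathsf{S}}/\mu(\mathsf{S})$.

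It then remains to extract the conditional densities of $q^*$ and match them to \eqref{eq:optimal}. For a prefix $u_{\leq t}$ the section $\mathsf{S}(u_{\leq t})=\{v_{>t}:(u_{\leq t},v_{>t})\in\mathsf{S}\}$ is Borel, and by Tonelli's theorem $u_{\leq t}\mapsto\mu(\mathsf{S}(u_{\leq t}))$ is Borel measurable with $\int_{\mathsf{U}^t}\mu(\mathsf{S}(u_{\leq t}))\,\mathrm{d}u_{\leq t}=\mu(\mathsf{S})$; therefore the marginal density of $U_{\leq t}$ under $q^*$ is $u_{\leq t}\mapsto\mu(\mathsf{S}(u_{\leq t}))/\mu(\mathsf{S})$ (with the convention $\mathsf{S}(u_{<1}):=\mathsf{S}$, consistent with $p_1^*(u_1|u_{<1}):=p_1^*(u_1)$). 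Dividing the marginal of $U_{\leq t}$ by that of $U_{<t}$ yields, for every $u_{<t}$ with $\mu(\mathsf{S}(u_{<t}))>0$,
\begin{align*}
p_t^*(u\,|\,u_{<t}) \;=\; \frac{\mu\!\left(\mathsf{S}((u_{<t},u))\right)}{\mu\!\left(\mathsf{S}(u_{<t})\right)},
\end{align*}
which is exactly \eqref{eq:optimal}; on the $\mu$-null set of prefixes with $\mu(\mathsf{S}(u_{<t}))=0$ the conditional density is immaterial.

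The main obstacle is the measure-theoretic bookkeeping rather than any single deep step: one must check that every section $\mathsf{S}(u_{\leq t})$ is Borel and that its measure is a Borel function of the prefix (Tonelli, via Lemma~\ref{lemma:measurability}), dispatch the prefixes of zero residual measure (they form a null set under the relevant marginal, so they affect neither the value nor the a.e.\ uniqueness of $p_t^*$), and justify that the differential entropies and the chain rule are well defined in this generality — which is precisely where $\mu(\mathsf{S})\in(0,\infty)$ is used, since it makes $\mathbbm{H}(q^*)=\log\mu(\mathsf{S})$ finite and keeps the displayed integrals from being of indeterminate form. A secondary point to state carefully is that equality in the KL bound forces $q=q^*$ only almost everywhere, so the ``unique optimal solution'' in Definition~\ref{def:MEF} is unique as an equivalence class of densities (equivalently, the conditional distributions $p_t^*$ are determined $q^*$-a.e.), which suffices for the feedback to be well defined.
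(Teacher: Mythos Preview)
Your proof is correct but takes a genuinely different route from the paper. The paper argues by induction on the horizon $\nt$: writing $\digamma(u)$ for the optimal value over $t=2,\dots,\nt$ conditioned on $U_1=u$, it uses the decomposition $\digamma=\max_{p_1}\int p_1(u)\digamma(u)\,\mathrm{d}u+\mathbbm{H}(p_1)$, invokes the induction hypothesis $\digamma(u)=\log\mu(\mathsf{S}(u))$, and then optimizes over $p_1$ by a one-step Gibbs/KL argument to obtain $p_1^*(u)=\mu(\mathsf{S}(u))/\mu(\mathsf{S})$ and $\digamma=\log\mu(\mathsf{S})$; the remaining $p_t^*$ are produced by iterating. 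You instead collapse the objective to the joint entropy $\mathbbm{H}(U)$ via the chain rule, solve the resulting max-entropy problem on $\mathsf{S}$ in one shot (uniform density, via a single KL inequality), and then disintegrate the uniform joint into its conditionals using Tonelli. Your argument is shorter and more global, and it makes the measure-theoretic hygiene (sections, null prefixes, a.e.\ uniqueness) explicit---points the paper glosses over. The paper's inductive argument, on the other hand, stays aligned with the sequential structure of the problem and constructs each $p_t^*$ directly without ever passing through the joint-to-conditional disintegration, which is a slightly cleaner match for the causal interpretation emphasized in Definition~\ref{def:MEF}.
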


\begin{remark}\emph{When the denominator $\mu\left(\mathsf{S}(u_{<t})\right)$ is zero, the numerator $\mu\left(\mathsf{S}((u_{<t}, u))\right)$ has also to be zero. For this case, we set $p_t^*(u|u_{<t})=0$ and this does not affect the optimality of~\eqref{eq:af1}-\eqref{eq:af2}.}
\end{remark}

The proof can be found in Appendix~\ref{app:proof_MEF}.}
{
The volume $\mu\left(\mathsf{S}\right)$ 
is a measure of flexibility inherent in the aggregator.  
We will hence call
$\log \mu\left(\mathsf{S}\right)$ the \emph{system capacity}.
Lemma~\ref{lemma:explicit} then says that the optimal value of~\eqref{eq:af}
is the system capacity, 
${\digamma}=\log \mu\left(\mathsf{S}\right)$.
Moreover
the maximum entropy feedback $(p^*_1, \dots, p^*_T)$ is the unique 
collection of conditional distributions that attains the system capacity
in~\eqref{eq:af}.
This is intuitive since the entropy of a random trajectory $x$ in $\mathsf{S}$ is maximized by the uniform distribution
$q^*$ in ~\eqref{eq:uniform} 
induced by the conditional distributions $(p_1^*,\ldots,p_{\nt}^*)$.}

Lemma~\ref{lemma:explicit} implies the following important properties of 
the maximum entropy feedback.
\begin{corollary}[Feasibility and flexibility]
\label{coro:property}
Let $p^*_t = p^*_t(\cdot|u_{<t})$ be the maximum entropy feedback  
at each time $t\in [\nt]$.
\begin{enumerate}
\item 
For any action trajectory $u=(u_1,\ldots,u_{\nt})$, if 
\begin{align*}
    p^*_t(u_t|u_{<t}) \ > \ 0 \quad \text{ for all } t\in [\nt]
\end{align*}
then $u\in\mathsf{S}$. 

\item 
For all $u_t,u_t'\in\mathsf{U}$ at each time $t\in [\nt]$, if
\begin{align*}
    p^*_t(u_t|u_{<t})\ \geq \ p^*_t(u_t'|u_{<t})
\end{align*}
then 
$\mu\left(|\mathsf{S}((u_{<t}, u_t))\right)\geq \mu\left(\mathsf{S}((u_{<t}  ,u_t'))\right)$.
\end{enumerate}
\end{corollary}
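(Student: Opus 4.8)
The plan is to derive Corollary~\ref{coro:property} directly from the closed-form expression for the MEF established in Lemma~\ref{lemma:explicit}, namely $p_t^*(u\mid u_{<t}) = \mu(\mathsf{S}((u_{<t},u)))/\mu(\mathsf{S}(u_{<t}))$. Both parts are essentially immediate consequences once one unwinds this formula, so the work is mostly bookkeeping with the nested sets $\mathsf{S}(u_{\leq t})$ and a telescoping argument.

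\textbf{Part (1).} Suppose $p_t^*(u_t\mid u_{<t}) > 0$ for every $t\in[\nt]$. I would argue by induction on $t$ that $\mu(\mathsf{S}(u_{\leq t})) > 0$ for all $t$, with the base case $t=1$ following from $p_1^*(u_1) = \mu(\mathsf{S}(u_1))/\mu(\mathsf{S}) > 0$ (recall $\mathsf{S}$ is assumed non-empty, and by Lemma~\ref{lemma:explicit} $\mu(\mathsf{S})$ is the exponential of the finite optimal value, hence positive). For the inductive step, since $p_t^*(u_t\mid u_{<t}) = \mu(\mathsf{S}(u_{\leq t}))/\mu(\mathsf{S}(u_{<t}))$ and the denominator is positive by hypothesis, positivity of the ratio forces $\mu(\mathsf{S}(u_{\leq t})) > 0$. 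Taking $t=\nt$ gives $\mu(\mathsf{S}(u_{\leq \nt})) = \mu(\mathsf{S}(u)) > 0$; but $\mathsf{S}(u) \subseteq \mathsf{U}^{0}$ is (the empty-tuple indicator of) whether $u$ itself is feasible, so a non-zero measure forces $u\in\mathsf{S}$. I would phrase this last step carefully: by definition $\mathsf{S}(u_{\leq \nt})$ is non-empty precisely when $u$ satisfies \eqref{eq:offline_2}--\eqref{eq:offline_3} with $u_{\leq \nt} = u$, i.e.\ when $u\in\mathsf{S}$; positivity of its measure thus certifies feasibility. (Equivalently, one can contrapose: if $u\notin\mathsf{S}$ then some prefix $u_{\leq t}$ admits no feasible completion, so $\mu(\mathsf{S}(u_{\leq t}))=0$ and the corresponding ratio $p_t^*(u_t\mid u_{<t})$ vanishes.)

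\textbf{Part (2).} Fix $t$ and the prefix $u_{<t}$. Applying Lemma~\ref{lemma:explicit} to both $u_t$ and $u_t'$ with the \emph{same} denominator $\mu(\mathsf{S}(u_{<t}))$, the inequality $p_t^*(u_t\mid u_{<t}) \geq p_t^*(u_t'\mid u_{<t})$ reads
\begin{align*}
\frac{\mu(\mathsf{S}((u_{<t},u_t)))}{\mu(\mathsf{S}(u_{<t}))} \ \geq \ \frac{\mu(\mathsf{S}((u_{<t},u_t')))}{\mu(\mathsf{S}(u_{<t}))}.
\end{align*}
If $\mu(\mathsf{S}(u_{<t})) > 0$, multiplying through yields $\mu(\mathsf{S}((u_{<t},u_t))) \geq \mu(\mathsf{S}((u_{<t},u_t')))$ immediately. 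If $\mu(\mathsf{S}(u_{<t})) = 0$, then by monotonicity of measure (since $\mathsf{S}((u_{<t},u)) $ injects into $\mathsf{S}(u_{<t})$ via prepending $u$, up to the obvious identification of coordinates) both numerators are zero and the conclusion holds trivially; I would note that in this degenerate case the convention $p_t^*\equiv 0$ from the remark after Lemma~\ref{lemma:explicit} makes the hypothesis vacuously consistent.

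\textbf{Main obstacle.} There is no deep obstacle here — the substance is entirely in Lemma~\ref{lemma:explicit}. The only point requiring care is the handling of the degenerate cases where a denominator vanishes (so the ratio is not literally defined), and the precise identification of the ``length-zero'' set $\mathsf{S}(u_{\leq \nt})$ with the feasibility indicator of $u$; both are routine once the conventions are spelled out, but should be stated explicitly so the argument in Part~(1) is airtight. I would also make sure the monotonicity claim $\mu(\mathsf{S}((u_{<t},u)))$ is well-defined and measurable, which is guaranteed by Lemma~\ref{lemma:measurability} applied to the slice $\{v\in\mathsf{U}^{\nt}: v\in\mathsf{S},\ v_{\leq t}=(u_{<t},u)\}$.
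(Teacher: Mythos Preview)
Your proposal is correct and follows essentially the same approach as the paper: both parts are read off directly from the explicit formula $p_t^*(u\mid u_{<t}) = \mu(\mathsf{S}((u_{<t},u)))/\mu(\mathsf{S}(u_{<t}))$ of Lemma~\ref{lemma:explicit}. Your write-up is in fact more careful than the paper's own (very terse) argument, spelling out the induction for Part~(1) and the degenerate denominator case for Part~(2), but the underlying idea is identical.
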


The proof is provided in Appendix~\ref{app:proof_feasibility}.
We elaborate the implication of Corollary \ref{coro:property}
for our online feedback-based solution approach.

\begin{remark}[Feasibility and flexibility]
\label{remark:feasibility}
\emph{Corollary \ref{coro:property} says that the proposed optimal flexibility
feedback $p^*_t$ provides the right information for the system operator to choose
its action $u_t$ at time $t$.
\begin{enumerate}
    \item
    (Feasibility)
    Specifically, the first statement of the corollary says that if the operator 
always chooses an action $u_t$ with positive conditional probability 
$p^*_t(u_t)>0$ for each time $t$, then the resulting action trajectory is 
guaranteed to be feasible, $u\in\mathsf{S}$, i.e., the system
will remain feasible at \emph{every} time $t\in [\nt]$ along the way.
\item
(Flexibility)
Moreover, according to the second statement of the corollary, if the system operator
chooses an action $u_t$ with a larger $p^*_t(u_t)$ value at time $t$, then 
the system will be more flexible going forward than if it had chosen another
signal $u_t'$ with a smaller $p^*_t(u_t')$ value, in the sense that there 
are more feasible trajectories in $\mathsf{S}((u_{<t}, u_t))$ 
going forward.
\end{enumerate}
}
\end{remark}
As noted in Remark \ref{remark:online}, despite characterizations 
that involve the whole action trajectory
$u$, such as $u\in\mathsf{S}$, these are \emph{online} properties.
This guarantees the feasibility of the online closed-loop control system 
depicted in Figure \ref{fig:framework}, and confirms the suitability 
of $p^*_t$ for online applications.

\begin{figure*}[htbp]
	\centering
	\includegraphics[scale=0.18]{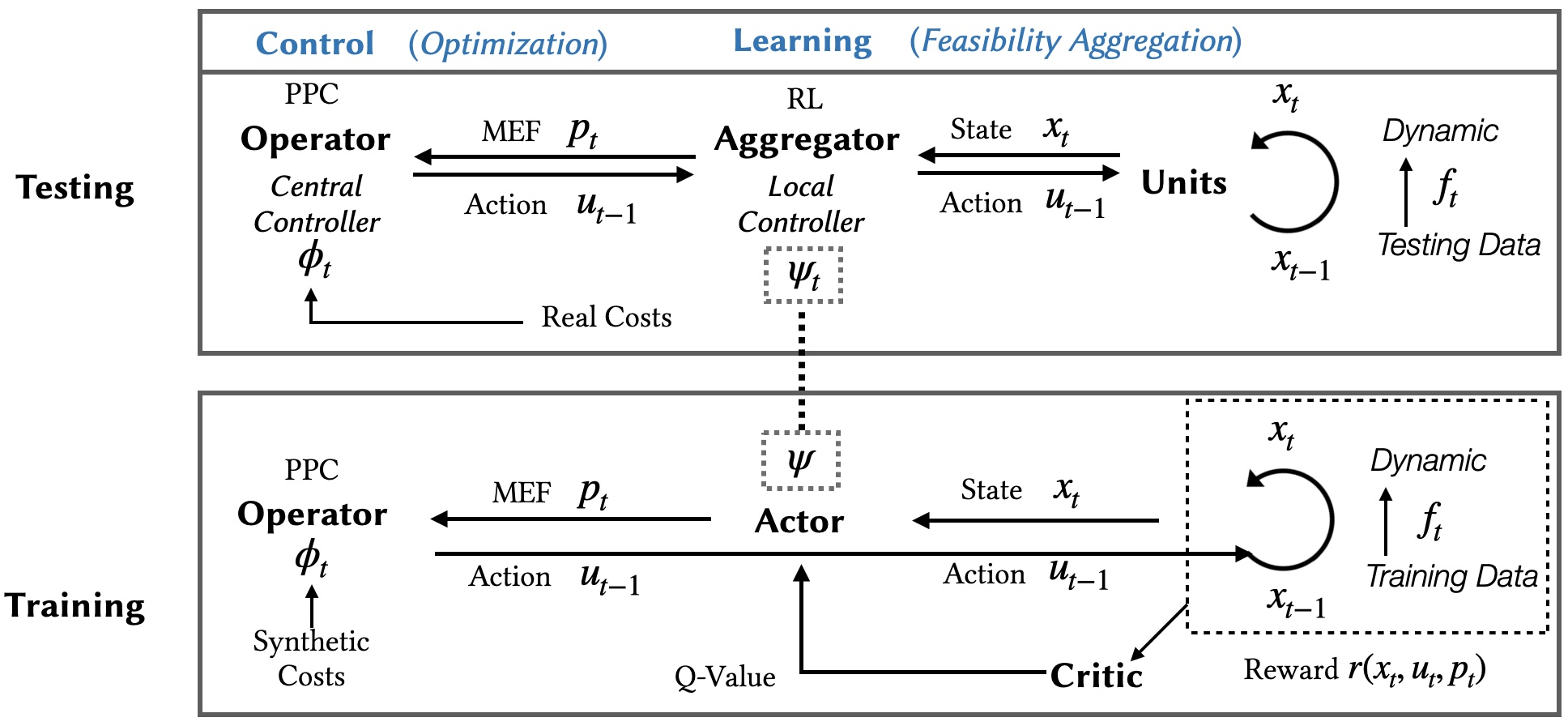}
	\caption{{Learning and testing architecture for learning aggregator functions.}}
	\label{fig:learning}
	\medskip
\end{figure*}

\section{Approximating Maximum Entropy Feedback via Reinforcement Learning}
\label{sec:learning}

For real-world applications, computing the maximum entropy feedback (MEF) could be computationally intensive. Thus, instead of computing it precisely, it is desirable to approximate it. {
In this section, we discuss the use of model-free reinforcement learning (RL) to generate an \textit{aggregator function} $\psi$. For practical implementation, we switch to the case when $\mathsf{U}$ is a discrete set and reuse the notation $\mathsf{P}$ to denote a probability simplex that contains all possible discrete MEF:
\begin{align}
\label{eq:simplex}
\mathsf{P}:=\left\{p\in\mathbbm{R}^{|\mathsf{U}|}:p(u)\geq 0, u\in\mathsf{U}; \sum_{u\in\mathsf{U}}p(u)=1\right\}.
\end{align}} 
We demonstrate that RL can be used to train a generator that outputs approximate MEF, given the state of the system. 
To be more precise, the learned aggregator function $\smash{{\psi}:\mathsf{X}\rightarrow \mathsf{P}}$ outputs an estimate of the MEF given the state $x_t$ at each time $t\in [\nt]$,
where $\mathsf{X}$ is the state space and $\mathsf{P}$ is the set of all possible MEF.
{Note that the aggregator does not know the cost functions, so it cannot directly use an RL algorithm and transmit the learned Q-function or actor-critic model to the operator. Moreover, even if the aggregator knows the cost functions, generating actions using RL needs to solve two contradicting tasks of both optimizing rewards and penalizing feasibility violations, which makes the design of reward function and reward clipping a challenging goal. In our approach, we separate the tasks of enforcing feasibility and minimizing costs. We generate MEF as feasibility signals via reinforcement learning methods, and optimize the operator's objective via a MPC-based method (introduced in Section~\ref{sec:PPC}). It is also worth noting that a number of effective heuristics may be available such as a greedy approximation in~\cite{li2020real} and other gradient-based or density estimation~\cite{singh2018nonparametric} methods.
We leave to future work the question of finding an optimal approximation algorithm.}

{
\subsection{Offline learning of aggregator functions}

To learn an aggregator function $\psi$ for estimating MEF, we use an actor-critic architecture~\cite{barto1983neuronlike} with separate policy and value function networks to enable the learning of policies on continuous action and state spaces. The actor-critic architecture is presented in Figure~\ref{fig:learning}, which shows the information update between actor and critic networks. Note that in practical actor-critic algorithms, typically the policy, Q-function(s) and value function(s) are modeled using deep neural networks and the parameters are updated using policy iteration via stochastic gradient descent. We omit those details in Figure~\ref{fig:learning}. 

\subsection{Training process}
During the training process, the data used for defining training dynamics are the episodes $(\mathsf{U}_t,\mathsf{X}_t,f_t)_{t=1}^{\nt}$. For example, for the EV charging application in Section~\ref{sec:example}, the training data of each episode (day) consist of historical private vectors $(a(j),d(j),e(j),r(j))$ specified by the users visited the charging station on the corresponding day.
Among actor-critic-based RL algorithms, off-policy actor-critic methods, such as deep deterministic policy gradient (DDPG)~\cite{lillicrap2015continuous} and soft actor-critic (SAC)~\cite{haarnoja2018soft} are known to attain better data efficiency in many applications. Below we take SAC, a maximum entropy deep RL algorithm, as an example to demonstrate the offline learning of an aggregator function $\psi$. In particular, for learning $\psi$, the objective of SAC is to maximize both the expected return and the expected entropy of the policy:
\begin{align}
\label{eq:sac_objective}
J(\psi) = \sum_{t=1}^{\nt} \mathbbm{E}_{(\overline{x}_t,p_t)\sim\rho_\psi}\left[r(\overline{x}_t,p_t)+\alpha \mathbbm{H}(\psi(\cdot|x_t)) \right]
\end{align}
where $\overline{x}_t:=(x_t,u_t)$; $\rho_\psi$ denotes the state-action marginals of the trajectory distribution induced by a policy $\psi$ and $r(\overline{x}_t,p_t)$ is a customized reward function. To estimate MEF, we need to determine a reward function $r(\overline{x}_t,p_t)$ in~\eqref{eq:sac_objective}. We adopt the following reward function that incorporates the constraints and the definition of MEF:
\begin{align}
\label{eq:reward}
r(\overline{x}_t,p_t) = &\mathbbm{H}(p_t) + \sigma g(\overline{x}_t; \mathsf{X}_t,\mathsf{U}_t)
\end{align}
where the first term is critical and it maximizes the entropy of the probability distribution $p_t$, based the definition of the MEF in Definition~\ref{def:MEF}; $g(\overline{x}_t)=g(x_t,u_t)$ is a function that rewards the state and action if they satisfy the constraints $x_t\in\mathsf{X}_t$ and $u_t\in\mathsf{U}_t$. The reward function is independent of the cost functions, which are synthetic costs in the training stage. A concrete example of $g(\overline{x}_t)$ is given in Section~\ref{sec:experiment}. We clip the output MEF given by the policy to make sure it is a probability vector in the probability defined in simplex~\eqref{eq:simplex}. In Figure~\ref{fig:reward}, a training curve is given and it displays the changes of rewards regarding to the number of training episodes. 

\subsection{Testing process}

With a trained aggregator function $\psi$ that tries to optimize $J(\psi)$ in~\eqref{eq:sac_objective}, we test the closed-loop system on new episodes defined by testing data, as shown in Figure~\ref{fig:learning}. The trained aggregator function (parameterized by a deep neural network) is used as a ``black box'' function that maps each state $x_{t}$ to feedback $p_t$.\footnote{In our model, in general the aggregator functions $\psi_1,\ldots,\psi_{\nt}$ can be time-dependent. In the offline learning process presented in this section, we use a single function to generate feedback.} Note that the real costs used in the testing process may not be same as the synthetic costs used in the training process, because the aggregator has no access to the costs as assumed in Section~\ref{sec:model}. 

In the sequel, with the learned MEF, we introduce a closed-loop framework that combines model predictive control (MPC) and RL to coordinate a system operator and an aggregator in real-time. { It is worth noting that the learned MEF may be different from the exact MEF provided in Definition~\ref{def:MEF}. However, later we show in Section~\ref{sec:experiment} that with the learned MEF, the constraints on the aggregator's side can almost be satisfied with a reasonable tuning parameter. In the EV charging example described in Section~\ref{sec:example}, this means the EV's batteries are fully charged; see Figure~\ref{fig:battery} for details.}

\begin{figure}[t]
	\centering
	\includegraphics[width=0.9\linewidth]{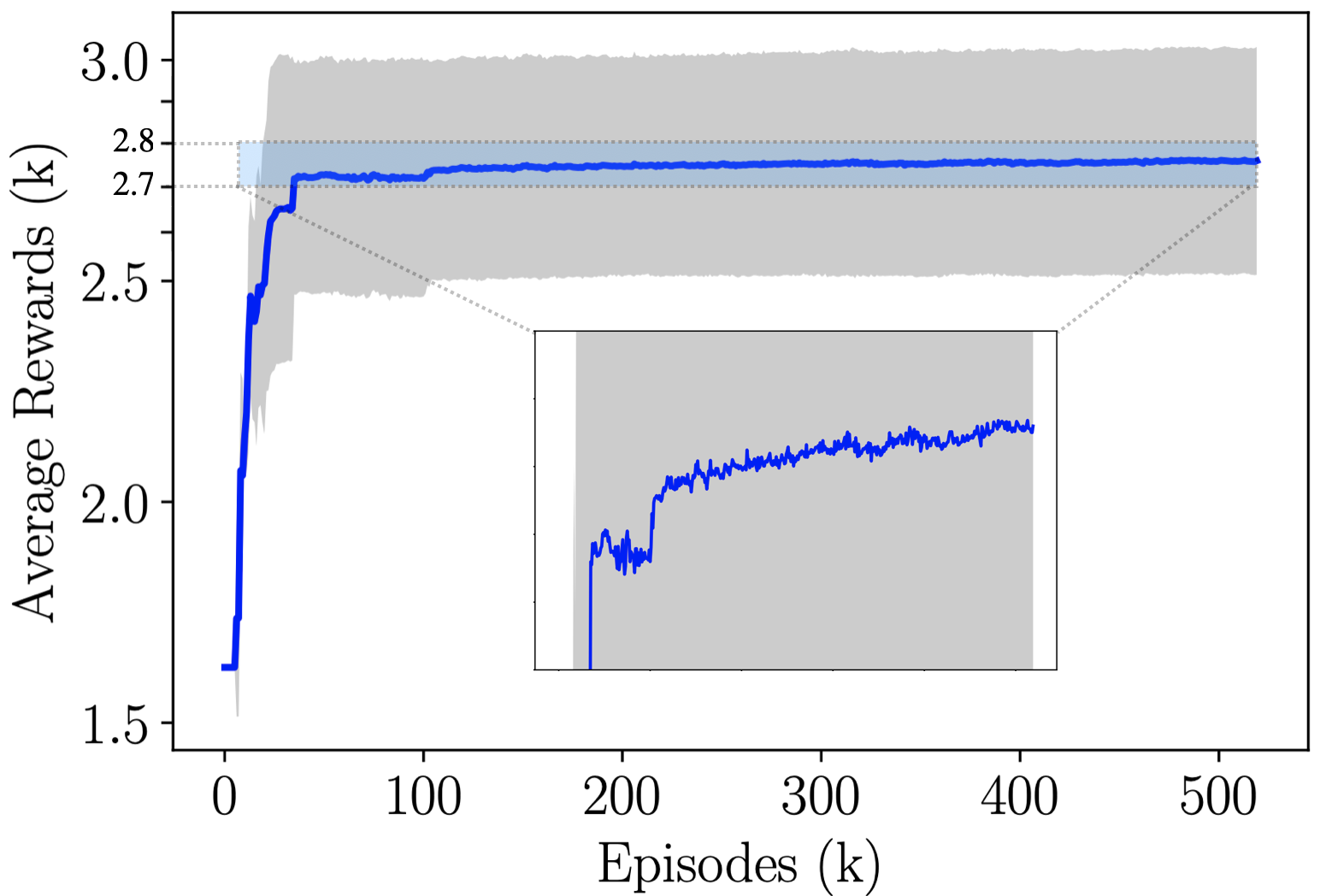}
	\caption{Average rewards (defined in~\eqref{eq:reward}) in the training stage with a tuning parameter $\beta=6\times 10^3$. Shadow region measures the variance.}
	\label{fig:reward}
	\medskip
\end{figure}
}

\section{Penalized Predictive Control}
\label{sec:PPC}

Consider the system model in Section~\ref{sec:model}. 
In this setting, the operator seeks to minimize the cost in an online manner, i.e., at time $t\in [\nt]$ the operator only knows the objective functions $c_1,\ldots,c_t$ and the flexibility feedback $p_1,\ldots,p_t$. The task of the operator is to, given the maximum entropy feedback, design a sequence of \textit{operator functions} $\phi_1,\ldots,\phi_{\nt}$ to generate actions $u_1,\ldots,u_{\nt}$ that are always feasible with respect to the constraints \emph{and} that minimize the cumulative cost.



\subsection{Key Idea: Maximum entropy feedback as a penalty term}

There is in general a trade-off between ensuring future flexibility and minimizing the current system cost in predictive control. The action $u_t$ guaranteeing the maximal future flexibility, i.e., having the largest $p^*_t(u_t|u_{<t})$ may not be the one that minimizes the current cost function $c_t$ and vice versa. Therefore, the online algorithm for the central controller must balance future flexibility and current cost. 
The key idea is to use MEF as a penalty term in the offline optimization problem.
Note that Corollary~\ref{coro:property} guarantees that the online agent can always find a feasible action $u\in\mathsf{S}$. Indeed, knowing the MEF $p^*_t$ for every $t\in [\nt]$ is equivalent to knowing the set of all admissible sequences of actions $\mathsf{S}$. {To see this, consider the unique maximum entropy feedback $(p^*_1,\ldots,p^*_{\nt})$ guaranteed
by Lemma~\ref{lemma:explicit} and let
$q(u)=\prod_{t=1}^{\nt}p^*_t(u_{t}|u_{<t})$ denote the joint distribution
of the action trajectory $u$.  Then \eqref{eq:optimal} implies that the joint
distribution $q$ is the uniform distribution over the set $\mathsf{S}$ of all feasible trajectories:
\begin{align}
\label{eq:uniform}
    q(u):=\begin{cases}
    1/\mu\left(\mathsf{S}\right) & \text{ if } u\in \mathsf{S}\\
    0 & \text{ otherwise}
    \end{cases}.
\end{align}}

{Using this observation, the constraints~\eqref{eq:offline_2}-\eqref{eq:offline_3} in the offline optimization can be rewritten as a penalty in the objective of~\eqref{eq:offline_1}. 
We present a useful lemma that both motivates our online control algorithm and builds up the optimality analysis in Section~\ref{sec:optimality}.}

\begin{lemma}
\label{lemma:connection}
The offline optimization~\eqref{eq:offline_1}-\eqref{eq:offline_3} is equivalent to the following unconstrained minimization for any $\beta>0$:
\begin{align}
\label{eq:unconstrained}
\inf_{u\in\mathsf{U}^{\nt}} \sum_{t=1}^{\nt} \left(c_t(u_t) - \beta\log p^*_t(u_t|{u}_{<t})\right)
\end{align}
\end{lemma}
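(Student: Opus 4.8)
The plan is to turn the penalty $-\beta\sum_t\log p_t^*(u_t\mid u_{<t})$ into the extended‑valued indicator of the feasible set $\mathsf{S}$ plus a $u$‑independent constant, after which the equivalence of the two programs is immediate. Concretely, I would fix an arbitrary trajectory $u=(u_1,\ldots,u_{\nt})\in\mathsf{U}^{\nt}$ and use the explicit MEF formula of Lemma~\ref{lemma:explicit} to write
\[
\sum_{t=1}^{\nt}\log p_t^*(u_t\mid u_{<t})
=\sum_{t=1}^{\nt}\Bigl(\log\mu\bigl(\mathsf{S}(u_{\le t})\bigr)-\log\mu\bigl(\mathsf{S}(u_{<t})\bigr)\Bigr),
\]
with the convention $\mathsf{S}(u_{<1})=\mathsf{S}$. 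This sum telescopes to $\log\mu(\mathsf{S}(u))-\log\mu(\mathsf{S})$. Since $\mathsf{S}(u)\subseteq\mathsf{U}^{0}$ is the one‑point set of unit measure exactly when $u$ satisfies \eqref{eq:offline_2}--\eqref{eq:offline_3}, i.e.\ $u\in\mathsf{S}$, and is empty otherwise, the right‑hand side equals $-\log\mu(\mathsf{S})$ for $u\in\mathsf{S}$ and $-\infty$ for $u\notin\mathsf{S}$.

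Next I would substitute this identity into \eqref{eq:unconstrained}. Using $c_t(\cdot)\ge 0$ and $\beta>0$, the objective of \eqref{eq:unconstrained} evaluated at $u$ equals $C_{\nt}(u)+\beta\log\mu(\mathsf{S})$ when $u\in\mathsf{S}$ and $+\infty$ when $u\notin\mathsf{S}$; equivalently it is $C_{\nt}(u)+\iota_{\mathsf{S}}(u)+\beta\log\mu(\mathsf{S})$, where $\iota_{\mathsf{S}}$ is the $\{0,+\infty\}$‑valued indicator of $\mathsf{S}$. Because $\beta\log\mu(\mathsf{S})$ does not depend on $u$ and $\mathsf{S}$ is precisely the feasible region of \eqref{eq:offline_1}--\eqref{eq:offline_3}, the infimum in \eqref{eq:unconstrained} equals the optimal value of \eqref{eq:offline_1}--\eqref{eq:offline_3} shifted by the constant $\beta\log\mu(\mathsf{S})$, and the two programs share the same set of minimizers, for every $\beta>0$. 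This is the asserted equivalence.

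The step I expect to be most delicate is justifying the telescoping when an intermediate slice $\mathsf{S}(u_{<t})$ has zero measure, in which case the ratio defining $p_t^*$ and the difference of logarithms are both indeterminate. I would dispatch this with the convention stated in the Remark after Lemma~\ref{lemma:explicit} — set $p_t^*(\cdot\mid u_{<t})\equiv 0$ whenever $\mu(\mathsf{S}(u_{<t}))=0$, so that $\log p_t^*=-\infty$ and the penalized cost is $+\infty$ there — together with Corollary~\ref{coro:property}(1), which guarantees that any trajectory all of whose factors $p_t^*(u_t\mid u_{<t})$ are positive already lies in $\mathsf{S}$; hence such degeneracies can occur only outside $\mathsf{S}$ (correctly assigned infinite penalized cost) or on a $\mu$‑null subset of $\mathsf{S}$. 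Under mild regularity of $\mathsf{S}$ — e.g.\ $\mathsf{S}$ equal to the closure of its relative interior, as for the polyhedral EV‑charging feasible sets, and $0<\mu(\mathsf{S})<\infty$ (finite system capacity) — this null part does not change the optimal value or the minimizers; and in the discrete setting of \eqref{eq:simplex} used in the implementation the identity $u\in\mathsf{S}\iff p_t^*(u_t\mid u_{<t})>0\ \forall t$ holds exactly, so the equivalence is unconditional. The remaining steps (the telescoping cancellation and the constant‑shift argument) are routine.
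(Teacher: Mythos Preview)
Your proposal is correct and follows essentially the same route as the paper: both arguments reduce the penalty $-\beta\sum_t\log p_t^*(u_t\mid u_{<t})$ to $-\beta\log q(u)$ with $q$ the uniform distribution on $\mathsf{S}$ (cf.\ \eqref{eq:uniform}), after which the equivalence with \eqref{eq:offline_1}--\eqref{eq:offline_3} is immediate. The paper's proof simply cites the identity $q(u)=\prod_t p_t^*(u_t\mid u_{<t})$ established in the text, whereas you re-derive it by telescoping from Lemma~\ref{lemma:explicit} and are more explicit about the measure-zero edge cases; the substance is the same.
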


The proof of Lemma~\ref{lemma:connection} can be found in Appendix~\ref{app:proof_connection}.
It draws a clear connection between MEF and the offline optimal, which we exploit in the design of an online system operator  in the next section. 

\subsection{Algorithm: Penalized Predictive Control via MEF}
\label{sec:PPC}

Our proposed design, termed penalized predictive control (PPC), is a combination of model predictive control (MPC) (\textit{c.f.}~\cite{camacho2013model}), which is a competitive policy for online optimization with predictions, and the idea of using MEF as a penalty term. This design makes a connection between the MEF and the well-known MPC scheme. The MEF as a feedback function, only contains limited information about the dynamical system in the local controller's side. (It contains only the feasibility information of the current and future time slots, as explained in Section~\ref{sec:feedback}). The PPC scheme therefore is itself a novel contribution since it shows that, even if \textit{only} feasibility information is available, it is still possible to incorporate the limited information to MPC as a \textit{penalty term}. 

We present PPC in Algorithm~\ref{alg:PPC}, where we use the following notation. 
{Let $\beta_t>0$ be a \textit{tuning parameter} in predictive control to trade-off the flexibility in the future and minimization of the current system cost at each time $t\in [\nt]$.} The next corollary follows whose proof is in Appendix~\ref{app:proof_feasibility}.
\begin{corollary}[Feasibility of PPC]
\label{corollary:feasibility}
{
When $p_t=p_t^*$ for all $t\in [\nt]$, the MEF defined in Definition~\ref{def:MEF}, the sequence of actions $u=(u_1,\ldots,u_{\nt})$ generated by the PPC in~\eqref{eq:PPC} always satisfies $u\in\mathsf{S}$ for any sequence of tuning parameters $(\beta_1,\ldots,\beta_{\nt})$. }
\end{corollary}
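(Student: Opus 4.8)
The plan is to show that each action $u_t$ chosen by the PPC scheme necessarily has positive conditional probability $p_t^*(u_t \mid u_{<t}) > 0$, and then invoke the first statement of Corollary~\ref{coro:property} to conclude $u \in \mathsf{S}$. The engine of the argument is Lemma~\ref{lemma:connection} together with the closed-form expression for the MEF in Lemma~\ref{lemma:explicit}. Although I have not yet seen the precise display \eqref{eq:PPC} that defines the PPC update, the natural reading of Algorithm~\ref{alg:PPC} is that at each time $t$ the operator solves a truncated version of the penalized objective in \eqref{eq:unconstrained} over the remaining horizon, with the prefix $u_{<t}$ fixed to the actions already committed, and the tuning parameter $\beta_t$ in front of the $-\log p_t^*$ penalty terms; the operator then plays the first coordinate $u_t$ of the minimizer.

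First I would argue that the per-stage penalized objective being minimized at time $t$ is not $+\infty$ on at least one action: since $\mathsf{S}$ is assumed non-empty and $p_{<t}^* > 0$ has been maintained by induction (so $\mu(\mathsf{S}(u_{<t})) > 0$), there exists at least one continuation $v_{\geq t}$ with $(u_{<t}, v_{\geq t}) \in \mathsf{S}$, hence $p_\ell^*(v_\ell \mid (u_{<t}, v_{<\ell})) > 0$ for all $\ell \geq t$ by \eqref{eq:optimal}, so $-\log p_\ell^*$ is finite along that continuation and the cost functions $c_\ell$ are finite-valued on the bounded set $\mathsf{U}$. Therefore the infimum over the remaining horizon is finite. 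Second, I would argue that any minimizer must assign $u_t$ with $p_t^*(u_t \mid u_{<t}) > 0$: if instead $p_t^*(u_t \mid u_{<t}) = 0$, then the penalty term $-\beta_t \log p_t^*(u_t \mid u_{<t}) = +\infty$ makes the whole objective $+\infty$, contradicting optimality since a finite-value continuation exists. Running this as an induction over $t = 1, \ldots, \nt$ — the base case being $\mu(\mathsf{S}) > 0$ — shows $p_t^*(u_t \mid u_{<t}) > 0$ for every $t$, and then Corollary~\ref{coro:property}(1) gives $u \in \mathsf{S}$ directly. The independence from the particular sequence $(\beta_1, \ldots, \beta_{\nt})$ is immediate because the argument only uses $\beta_t > 0$, which guarantees the $+\infty$ penalty is genuinely active.

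The main obstacle I anticipate is making precise the claim that the PPC minimizer ``exists'' and is attained rather than merely approached, since \eqref{eq:unconstrained} is written with an infimum and the penalized objective $c_t(u_t) - \beta_t \log p_t^*(u_t \mid u_{<t})$ need not be lower semicontinuous in $u_t$ as $p_t^*$ comes from a ratio of Lebesgue measures of feasible-trajectory sets. If \eqref{eq:PPC} is genuinely an $\arg\min$, I would need a compactness/attainment argument (or, in the discrete-$\mathsf{U}$ setting of Section~\ref{sec:learning}, this is trivial since $\mathsf{U}$ is finite); if \eqref{eq:PPC} instead picks an approximate minimizer or uses a reformulation over $\mathsf{S}(u_{<t})$ directly, the feasibility conclusion is even more immediate. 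I would structure the proof to handle the finite-$\mathsf{U}$ case cleanly and remark that the continuous case follows by the same reasoning provided the minimization is interpreted with the natural feasibility-respecting domain, i.e., restricting to continuations that keep every conditional probability positive, which is exactly the set $\mathsf{S}(u_{<t})$ by Lemma~\ref{lemma:explicit}.
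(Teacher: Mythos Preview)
Your approach is essentially the same as the paper's: argue that $\beta_t > 0$ forces the minimizer of the PPC objective to satisfy $p_t^*(u_t \mid u_{<t}) > 0$ (since otherwise the $-\beta_t \log p_t^*$ term is $+\infty$), then invoke Corollary~\ref{coro:property}(1). One correction worth noting: the PPC update \eqref{eq:PPC} is a \emph{per-stage} minimization in the single variable $u_t$, namely $u_t = \arginf_{u_t\in\mathsf{U}}\big(c_t(u_t) - \beta_t \log p_t(u_t \mid u_{<t})\big)$, not a receding-horizon optimization over $u_{t:\nt}$ as you speculated; consequently Lemma~\ref{lemma:connection} plays no role here, and your induction step simplifies (you only need that $\mu(\mathsf{S}(u_{<t}))>0$ so that some $u_t$ has finite objective). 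With that simplification, your argument and the paper's are identical in substance; your added attention to attainment of the $\arginf$ is a legitimate technical point that the paper leaves implicit.
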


\begin{algorithm}[t!]
\small
	\hrule
	\hrule
	\medskip
	\KwData{Sequentially arrived cost functions and MEF}
	\KwResult{Actions $u=(u_1,\ldots,u_{\nt})$}
	
\For{$t = 1,\ldots,\nt$}{
  				Choose an action $u_t$ by minimizing:
\begin{align}
\label{eq:PPC}
        u_t = \phi_t(p_t):=
        &\arginf_{u_t\in \mathsf{U}} \left(c_t(u_t) - \beta_t\log p_t(u_t|u_{<
        t})\right)
        \end{align}
        }
        
{Return {$u$}\;}
	\hrule
	\hrule
	\medskip
\caption{Penalized Predictive Control (PPC).}
	\label{alg:PPC}
\end{algorithm}

\subsection{Framework: Closed-loop control between local and central controllers}
\label{sec:framework}
Given the PPC scheme described above,  we can now formally present our online control framework for the distant central controller and local controller (defined in Section~\ref{sec:model}). {Recall that an overview of the closed-loop control framework has been given in Algorithm~\ref{alg:online_control}, where $\phi$ denotes an operator function and $\psi$ is an aggregator function.}  To the best of our knowledge, this paper is the first to consider such a closed-loop control framework with limited information communicated in real-time between two geographically separate controllers seeking to solve an online control problem. We present the framework below.

At each time $t\in [\nt]$, the local controller first efficiently generates estimated MEF $p_t\in\mathsf{P}$ using an aggregator function $\psi_t$ trained by a reinforcement learning algorithm. After receiving the current MEF $p_t$ and cost function $c_t$ (future $w$ MEF and costs if predictions are available), the central controller uses the PPC scheme in Algorithm~\ref{alg:PPC} to generate an action $u_t\in\mathsf{U}$ and sends it back to the local controller. The local controller then updates its state $x_t\in\mathsf{X}$ to a new state $x_{t+1}$ based on the system dynamic in~\eqref{eq:system} and repeats this procedure again. In the next Section, we use an EV charging example to verify the efficacy of the proposed method.

{
\subsection{Optimality Analysis}
\label{sec:optimality}
To end our discussion of PPC we focus on optimality.  For the ease of analysis, we assume that the action space $\mathsf{U}$ is the set of real numbers $\mathbb{R}$; however, as noted in Remark~\ref{remark:action_space}, our system and the definition of MEF can also be made consistent with a discrete action space.


To understand the optimality of PPC we focus on standard regularity assumptions for the cost functions and the time-varying constraints. We assume cost functions are strictly convex and differentiable, which is common in practice. Further, let $\mu(\cdot)$ denote the Lebesgue measure. Note that the set of subsequent feasible action trajectories $\mathsf{S}(u_{\leq t})$ is Borel-measurable for all $t\in [\nt]$, implied by the proof of Corollary~\ref{lemma:measurability}. We also assume that the measure of the set of feasible actions $\mu(\mathsf{S}(u_{\leq t}))$ is differentiable and strictly logarithmically-concave with respect to the subsequence of actions $u_t=(u_1,\ldots,u_t)$ for all $t\in [\nt]$, which is also common in practice, e.g., it holds in the case of inventory constraints $\sum_{t=1}^{\nt}\|u_t\|_2\leq B$ with a budget $B>0$.  Finally, recall the definition of the set of subsequent feasible action trajectories:
\begin{align*}
\mathsf{S}(u_{\leq t})
:=& \Big\{v_{>t}\in\mathsf{U}^{\nt-t}: v\text{ satisfies } \eqref{eq:offline_2}-\eqref{eq:offline_3}, v_{\leq t} = u_{\leq t} \Big\}.
\end{align*}
Putting the above together, we can state our assumption formally as follows.

\begin{assumption}
\label{ass:2}
The cost functions $c_t(u):\mathbb{R}\rightarrow \mathbb{R}_+$ are differentiable and strictly convex. The mappings $\mu(\mathsf{S}(u_{\leq t})):\mathbb{R}^{t}\rightarrow\mathbbm{R}_+$ are differentiable and strictly logarithmically-concave.
\end{assumption}

Given regularity of the cost functions and time-varying constraints, we can prove optimality of PPC.

\begin{theorem}[Existence of optimal actions]
\label{thm:optimality}
Let $\mathsf{U}=\mathbb{R}$. Under Assumption~\ref{ass:1} and \ref{ass:2}, there exists a sequence $b_1,\ldots,b_{\nt}$ such that implementing~\eqref{eq:PPC} with $\beta_t=b_t$ and $p_t=p_t^*$ at each time $t\in [\nt]$ ensures $u=(u_1^*,\ldots,u_\nt^*)$, i.e., the generated actions are optimal.
\end{theorem}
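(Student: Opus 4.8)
\textit{Proof plan.} The plan is to run the PPC recursion along the unique offline-optimal trajectory and to choose the tuning parameters step by step so that each iteration reproduces the corresponding optimal action. First I would record what Assumption~\ref{ass:2} buys. By Lemma~\ref{lemma:connection} (specialized to one fixed $\beta$), together with the explicit form \eqref{eq:optimal} and a telescoping of $\sum_t\log p_t^*$, problem \eqref{eq:offline_1}--\eqref{eq:offline_3} is equivalent to minimizing $\sum_{t=1}^{\nt}c_t(u_t)$ over $\mathsf{S}$; strict convexity of the $c_t$ then yields a \emph{unique} optimizer $u^*=(u_1^*,\dots,u_\nt^*)\in\mathsf{S}$. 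Strict log-concavity and differentiability of the slice volumes $\mu(\mathsf{S}(u_{\leq t}))$ guarantee, in turn, that along $u^*$ every slice has positive measure, so $p_t^*(u_t^*\,|\,u_{<t}^*)>0$, and that for each $t$ the map $u_t\mapsto-\log\mu(\mathsf{S}(u^*_{<t},u_t))$ is $C^1$ and strictly convex on the interval $I_t:=\{u_t:\mu(\mathsf{S}(u^*_{<t},u_t))>0\}$, with $u_t^*$ interior to $I_t$ (the boundary case being treated separately).

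Then I would carry out an induction on $t$. Suppose PPC with $\beta_1=b_1,\dots,\beta_{t-1}=b_{t-1}$ has produced the prefix $u^*_{<t}$. By \eqref{eq:PPC} and \eqref{eq:optimal}, step $t$ minimizes over $u_t\in\mathsf{U}=\mathbb{R}$ the strictly convex $C^1$ function $u_t\mapsto c_t(u_t)-b_t\log\mu(\mathsf{S}(u^*_{<t},u_t))$ (the term $b_t\log\mu(\mathsf{S}(u^*_{<t}))$ being constant in $u_t$), whose unique minimizer $\hat u_t(b_t)$ is the root of $c_t'(u_t)=b_t\,\partial_{u_t}\log\mu(\mathsf{S}(u^*_{<t},u_t))$. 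On the other hand, the principle of optimality applied to $u^*$ gives that $u_t^*$ minimizes $c_t(u_t)+V_{t+1}(u^*_{<t},u_t)$, where $V_{t+1}(u_{\leq t}):=\inf\{\sum_{s>t}c_s(u_s):(u_{\leq t},u_{>t})\in\mathsf{S}\}$ is the (convex) optimal cost-to-go, so $c_t'(u_t^*)=-\,\partial_{u_t}V_{t+1}(u^*_{<t},u_t)\big|_{u_t^*}$ (interpreting the right-hand side as a subgradient if $V_{t+1}$ is not smooth there). I would then set
\begin{align*}
b_t \ := \ \frac{c_t'(u_t^*)}{\partial_{u_t}\log\mu(\mathsf{S}(u^*_{<t},u_t))\big|_{u_t^*}}
\end{align*}
when the denominator is nonzero, and $b_t:=0$ (equivalently $b_t\to0^+$) when $c_t'(u_t^*)=0$; in either case the stationarity condition of $\hat u_t(b_t)$ coincides with that of $u_t^*$, so $\hat u_t(b_t)=u_t^*$ by strict convexity, which closes the induction. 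Feasibility $u^*\in\mathsf{S}$ is then automatic from Corollary~\ref{corollary:feasibility}.

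The hard part will be to show that the $b_t$ above is a legitimate (nonnegative, and strictly positive whenever $c_t'(u_t^*)\neq0$) tuning parameter --- equivalently, that $\partial_{u_t}\log\mu(\mathsf{S}(u^*_{<t},u_t))\big|_{u_t^*}$ does not vanish when $c_t'(u_t^*)\neq0$ and carries the same sign as $c_t'(u_t^*)$. This is exactly where Assumption~\ref{ass:2} is indispensable: strict log-concavity makes $u_t\mapsto\mu(\mathsf{S}(u^*_{<t},u_t))$ strictly unimodal, so the claim reduces to showing that any move of $u_t$ away from $u_t^*$ that strictly decreases the current cost $c_t$ also strictly decreases the future flexibility $\mu(\mathsf{S}(u^*_{<t},u_t))$ --- i.e.\ that $\partial_{u_t}V_{t+1}$ and $-\partial_{u_t}\log\mu(\mathsf{S}(u^*_{<t},u_t))$ point the same way at $u_t^*$. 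I would establish this by re-expressing $V_{t+1}(u^*_{<t},\cdot)$ through the maximum entropy feedback of the tail horizon $\{t+1,\dots,\nt\}$ conditioned on the prefix $u^*_{\leq t}$ (Lemma~\ref{lemma:connection} applied to that subproblem) and invoking the strict concavity in Assumption~\ref{ass:2}; the remaining ingredients --- uniqueness of $u^*$, strict convexity and continuity of each per-step problem, and the induction bookkeeping --- are routine.
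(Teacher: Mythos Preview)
Your approach is essentially the same as the paper's: both derive the first-order condition for $u_t^*$ via the optimal cost-to-go, derive the first-order condition for the PPC step, and then choose $b_t$ as the ratio of the two derivatives so that $u_t^*$ satisfies the PPC stationarity condition and is therefore (by strict convexity) the unique PPC minimizer. The paper arrives at the same ratio $b_t=f'(u_t^*)/g'(u_t^*)$ with $g(u_t)=-\log\mu(\mathsf{S}(u^*_{<t},u_t))$ and, after telescoping the penalized cost-to-go from Lemma~\ref{lemma:connection}, its $f$ coincides with your $V_{t+1}$.

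The one substantive difference is that you devote the ``hard part'' to arguing that $b_t\geq 0$ (and to handling $g'(u_t^*)=0$). The paper simply sets $b_t=f'(u_t^*)/g'(u_t^*)$ and stops, without checking sign or division by zero; it implicitly treats the theorem as asserting existence of \emph{some} real sequence $b_1,\dots,b_T$, not necessarily positive. Your concern is legitimate --- if $b_t<0$ the PPC objective $c_t+b_tg$ need not be convex and can be unbounded below on the feasibility interval --- but the paper does not address it, so for matching the paper you may drop that paragraph. If you want to keep it for rigor, note that your sketched route (re-expressing $V_{t+1}$ via the tail MEF and invoking strict log-concavity) is plausible but would need to be made precise; the paper offers no help there.
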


Crucially, Theorem~\ref{thm:optimality} shows that there exists a sequence of ``good'' tuning parameters so that the PPC scheme is able to generate optimal actions under reasonable assumptions. However, note that the assumption of $\mathsf{U}=\mathbb{R}$ is fundamental.  When the action space $\mathsf{U}$ is discrete or $\mathsf{U}$ is a high-dimensional space, it is impossible to generate the optimal actions because, in general, fixing $t$, the differential equations in the proof of Theorem~\ref{thm:optimality} (see Appendix~\ref{app:proof_optimality}) do not have the same solution for all $\beta_t>0$. Therefore a detailed regret analysis is necessary in such cases, which is a challenging task  for future work. 

}

\section{Numerical Results}
\label{sec:experiment}

In this section, we present experimental results for the case of online EV charging, introduced  in Section~\ref{sec:example} as an example of our system model (see Section~\ref{sec:model}). {The notation used in this section, if not defined, can be found in   Section~\ref{sec:example}.}

{
\subsection{Experimental setups}
\label{sec:experiemental_results}

In the following, we present settings of parameters and useful metrics in our experiments. 

{
\subsubsection{Dataset and hardware}

We use real EV charging data from ACN-Data~\cite{lee2019acn}, which is a dataset collected from adaptive EV charging networks (ACNs) at Caltech and JPL. The detailed hardware setup for that EV charging network structure can be found in~\cite{lee2020adaptive}.}

\subsubsection{Error Metrics}
Recall the EV charging example in optimization~\eqref{eq:ev_1}-\eqref{eq:ev_2}. We first introduce two error metrics to measure the EV charging constraint violations. Note that the constraints~\eqref{eq:f1},~\eqref{eq:f5} and ~\eqref{eq:f4} are hard constraints depending only on the scheduling policy, but not the actions and energy demands. Therefore they can be automatically satisfied in our experiments by fixing a scheduling policy satisfying them such as least laxity first. Violations may happen on constraint~\eqref{eq:f3} and~\eqref{eq:f2}. To measure the violation of~\eqref{eq:f3},
we use the (normalized) mean squared error (MSE) as the tracking error:
\begin{align}
\label{eq:mse}
\mathsf{MSE}:= \sum_{k=1}^{L}\sum_{t=1}^{\nt}\Big|\sum_{j=1}^{N}s_{t}^{(k)}(j)-u^{(k)}_t\Big|^2 / \left(L\times \nt\times\xi\right),
\end{align}
where $u^{(k)}_t$ is the $t$-th power signal for the $k$-th test and $s_{t}^{(k)}(j)$ is the energy scheduled to the $j$-th charging session at time $t$ for the $k$-th test. {To better approximate real-world cases, we consider an additional \textit{operational constraints} for the operator (central controller) and require that $u_t\leq \xi$ (kWh) for every $t\in [\nt]$.} The total number of tests is $L$ and the total number of charging sessions is $\nn$. Additionally, define the mean percentage error with respect to the undelivered energy corresponding to~\eqref{eq:f2}
as
\begin{align}
\label{eq:mpe}
\mathsf{MPE}:=1- \sum_{k=1}^{L}\sum_{t=1}^{\nt}\sum_{j=1}^{\nn}s_{t}^{(k)}(j)\big/{ \Big((L\times \nt)\cdot\sum_{j=1}^{\nn}e_j\Big)},
\end{align}
where $e_j$ is the energy request for each charging session $j\in [\nn]$; $s_{t}^{(k)}(j)$ is the energy scheduled to the $j$-th charging session at time $t$ for the $k$-th test.

\subsubsection{Hyper-parameters}

{
\label{sec:details}
\begin{table}[h]
    \centering
       \caption{Hyper-parameters in the experiments.}
    \begin{tabular}{l|l}
    \hline
    Parameter & Value\\
   \hline
   \hline
     \multicolumn{2}{c}{System Operator} \\
     \hline
     Number of power levels $|\mathsf{U}|$ & $10$\\
     Cost functions $c_1,\ldots,c_{\nt}$ & Average LMPs\\ 
     Operator function $\phi$ & Penalized Predictive Control\\
     Tuning parameter $\beta$ & $1\times 10^3$ - $1\times 10^6$\\
    \hline
     \multicolumn{2}{c}{EV Charging Aggregator} \\
     \hline
      Number of Chargers ${\ns}$  & 54\\
      State space $\mathsf{X}$ &$\mathbb{R}_{+}^{108}$ \\
      Action space & $[0,1]^{10}$\\
      Time interval $\Delta$ & $12$ minutes\\
      Private vector $(a(j),d(j),e(j),r(j))$ & ACN-Data~\cite{lee2019acn}\\
      Power rating & $150$ kW\\
      Scheduling algorithm $\pi$ & Least Laxity First (LLF)\\
      Laxity & $d_t(j)-e_t(j)/r(j)$\\
      RL algorithm & Soft Actor-Critic (SAC)~\cite{haarnoja2018soft}\\
      Optimizer & Adam~\cite{kingma2014adam}\\
      Learning rate & $3\cdot 10^{-4}$\\
      Discount factor & $0.5$\\
      Relay buffer size & $10^6$\\
      Number of hidden layers & $2$\\
      Number of hidden units per layer & $256$\\
      Number of samples per minibatch & $256$\\
      Non-linearity & ReLU\\
      Reward function & $\sigma_1=0.1$, $\sigma_2=0.2$, $\sigma_3=2$\\
      Temperature parameter & $0.5$\\
       \hline
      \hline
    \end{tabular}
  \label{table:parameter}
\end{table}

The detailed parameters used in our experiments are shown in Table~\ref{table:parameter}. 

\paragraph{Control spaces}
For the experimental results presented in this section, the control state space is $\mathsf{X} = \mathbbm{R}_{+}^{2\times{\ns}}$ where ${\ns}$ is the total number of charging stations and a state vector for each charging station is $(e_t,[d(j)-t]^{+})$, i.e., the remaining energy to be charged and the remaining charging time if it is being used (see Section~\ref{sec:example}); otherwise the vector is an all-zero vector. The control action space is $\mathsf{U} = \{0,15,30,\ldots,150\}$ (unit: kW) with $|\mathsf{U}|=10$, unless explicitly stated. The scheduling policy $\pi$ is fixed to be least-laxity-first (LLF).

\paragraph{RL spaces}
The RL action space\footnote{Note that the RL action space (consisting of $p_{t}$'s) and state space  (consisting of $x_t$'s) referred here are the standard definitions in the context of RL and they are different from the ``control action space" $\mathsf{U}$ and ``control state space" $\mathsf{X}$ defined in Section~\ref{sec:model}.} of the Markov decision process used in the RL algorithm is $[0,1]^{10}$. The outputs of the neural networks are clipped into the probability simplex (space of MEF) $\mathsf{P}$  afterwards.

\paragraph{RL rewards}
We use the following specific reward function for our EV charging scenario, as a concrete example of~\eqref{eq:reward}:
\begin{align}
\nonumber
r_{\mathrm{EV}}(\overline{x}_{t},p_t) =&\mathbbm{H}(p_t)\\
\nonumber
+ &\sigma_1 \sum_{i=1}^{\nn'}\left\|u_t(i)\right\|_2\\
\nonumber
-&\sigma_2 \sum_{i=1}^{\nn'}\left(\mathbf{I}(a(j_i)\leq t\leq a(j_i)+\Delta)\Big[e(i) - \sum_{t=1}^{\nt}u_t(i) \Big]_{+}\right)\\
-&\sigma_3\left|\phi_t(p_t) - \sum_{i=1}^{\nn'}u_t(j)\right|
\label{eq:reward_EV}
\end{align}
where $\sigma_1,\sigma_2$ and $\sigma_3$ are positive constants; $\nn'$ is the number of EVs being charged; $\phi_t$ is the operator function, which is specified by~\eqref{eq:PPC}; $\mathbf{I}(\cdot)$ denotes an indicator function and $a(j_i)$ is the arrival time of the $i-th$ EV in the charging station with $j_i$ being the index of this EV in the total accepted charging sessions $[\nn]$. The entropy function $\mathbbm{H}(p_t)$ in the first term is a greedy approximation of the definition of MEF (see Definition~\ref{def:MEF}).
The second term is to further enhance charging performance and the last two terms are realizations of the last term in~\eqref{eq:reward} for constraints~\eqref{eq:f3} and~\eqref{eq:f2}. Note that The other constraints in the example shown in Section~\ref{sec:example} can automatically be satisfied by enforcing the constraints in the fixed scheduling algorithm $\pi$.
With the settings described above, in Figure~\ref{fig:reward} we show a typical training curve of the reward function in~\eqref{eq:reward_EV}.  We observe policy convergence with respect to a wide range of choices of the hyper-parameters $\sigma_1,\sigma_2$ and $\sigma_3$. In our experiments, we do not optimize them but fix the constants in~\eqref{eq:reward_EV} as $\sigma_1=0.1$, $\sigma_2=0.2$ and $\sigma_3=2$.

\paragraph{Cost functions}

We consider the specific form of costs in~\eqref{eq:ev_1}.
In the RL training process, we train an aggregator function $\psi$ using
linear price functions $c_t=1-t/24$ where $t\in [0,24]$ (unit: Hrs)  is the time index and we test the trained system with real price functions $c_1,\ldots,c_{\nt}$ being the average locational marginal prices (LMPs) on the CAISO (California Independent System Operator) day-ahead market in 2016 (depicted at the bottom of Figure~\ref{fig:charging}).

\paragraph{Tuning parameters}

In PPC defined in Algorithm~\ref{alg:PPC}, there is a sequence of tuning parameters $(\beta_1,\ldots,\beta_{\nt})$. In our experiments, we fix $\beta_t=\beta$ for all $t\in [\nt]$ where $\beta>0$ is a universal tuning parameter that can be varied in our experiments. 

}

\subsection{Experimental results}

\begin{figure*}[t]
    \centering
    \includegraphics[width=0.85\linewidth]{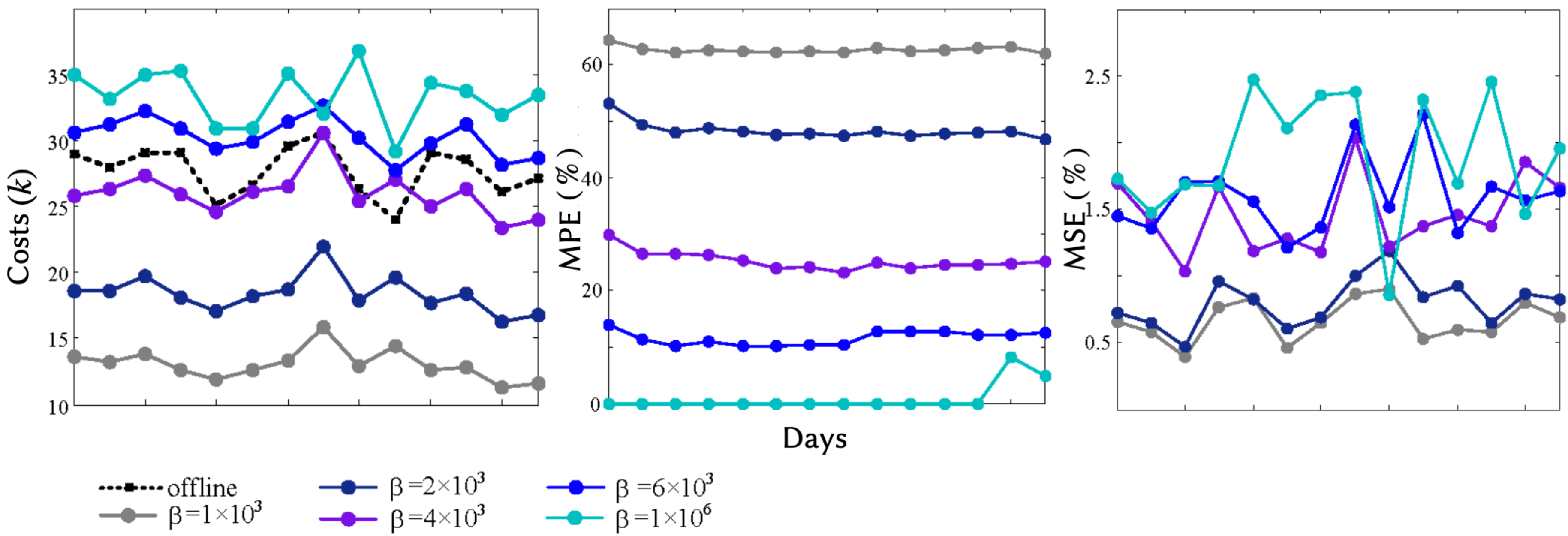}
    \caption{Trade-offs of cost and charging performance.  The dashed curve in the left figure corresponds to offline optimal cost. The tested days are selected (with no less than $30$ charging sessions, i.e., $N\geq 30$) from Dec. 2, 2019 to Jan. 1, 2020.}
    \label{fig:tuning}
\end{figure*}

\begin{figure*}[h!]
    \centering
    \includegraphics[scale=0.45]{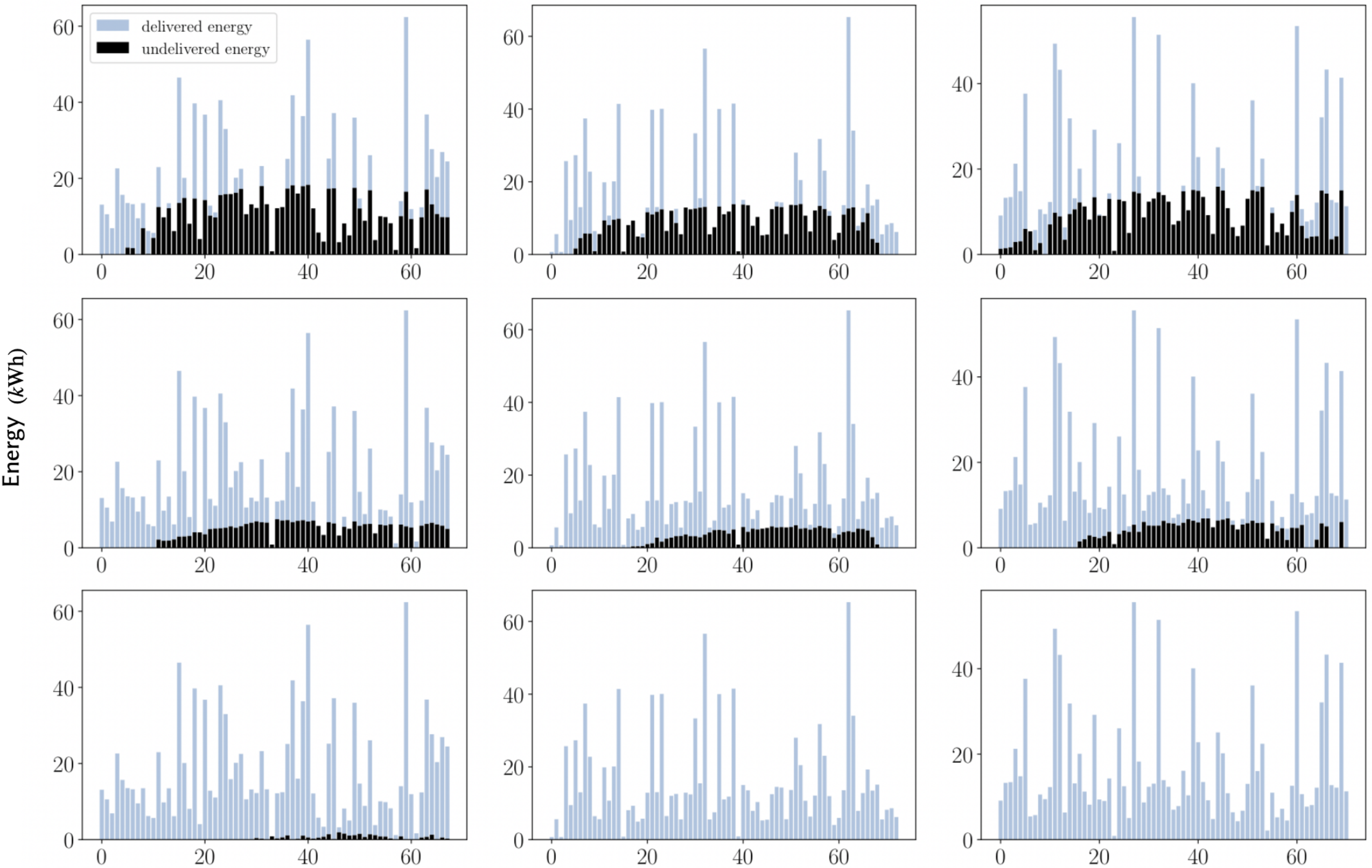}
    \caption{Charging results of EVs controlled by PPC with tuning parameters $\beta=2\times 10^3$ (top), $4\times 10^3$ (mid) and $6\times 10^3$ (bot) for selected days (with no less than $30$ charging sessions, i.e., $N\geq 30$) from Dec. 2, 2019 to Jan. 1, 2020. {Each bar represents a charging session.}}
    \label{fig:battery}
\end{figure*}
\subsubsection{Sensitivity of $\beta$}
We first show how the changes of the tuning parameter $\beta$ affect the total cost and feasibility. Figure~\ref{fig:tuning} compares the results by varying $\beta$. The agents are trained on data collected from Nov. 1, 2018 to Dec. 1, 2019 and the tests are performed on data from Dec. 2, 2019 to Jan. 1, 2020 .  Weekends and days with less than $30$ charging sessions are removed from both training and testing data.  For charging performance, we show in Figure~\ref{fig:battery} the battery states of each session after the charging cycle ends, tested with tuning parameters $\beta=2\times 10^3,4\times 10^3$ and $6\times 10^3$ respectively. The results indicate that with a sufficiently large tuning parameter, the charging actions given by the PPC is able to satisfy EVs' charging demands and in  practice, there is a trade-off between costs and feasibility depending on the choice of tuning parameters.

\begin{figure}[t]
    \centering
    \includegraphics[scale=0.6]{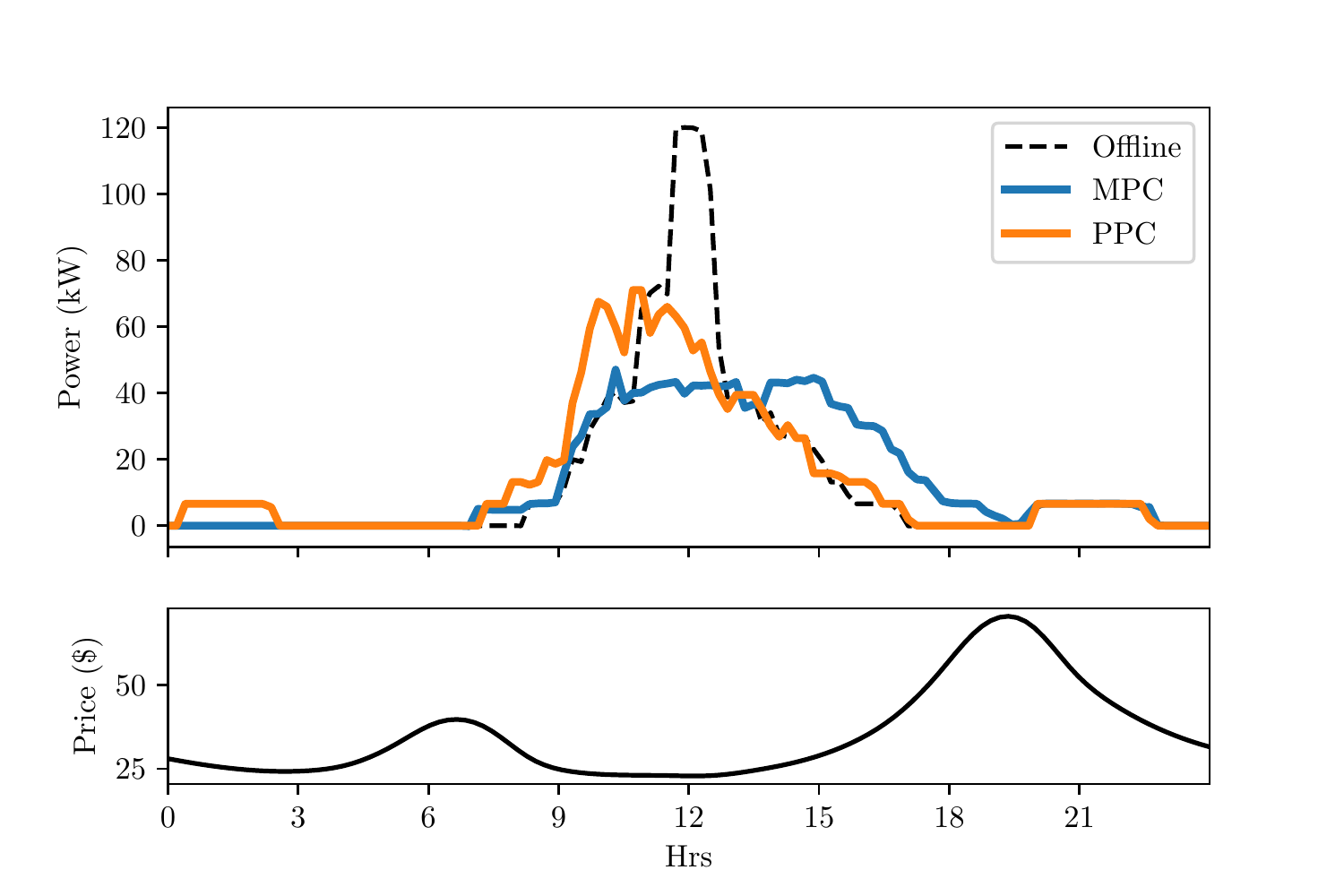}
    \caption{{ Substation charging rates generated by the PPC (orange) in the closed-loop control shown in Algorithm~\ref{alg:online_control}, together with the MPC generated (blue) and global optimal (dashed black) charging rates. }}
    \label{fig:charging}
\end{figure}

\subsubsection{Charging curves}
{


In Figure~\ref{fig:charging}, substation charging rates (in kW) are shown. The charging rates generated by the PPC correspond to a trajectory $(\sum_{j}{s}_1(j)/\Delta,\ldots,\sum_{j}{s}_{\nt}(j))\Delta$), which is the aggregate charging power given by the PPC for all EVs at each time $t=1,\ldots,\nt$. 
The agent is trained on data collected at Caltech from Nov. 1, 2018 to Dec. 1, 2019 and tested on Dec. 16, 2019 at Caltech using real LMPs on the CAISO day-ahead market in 2016.  We use a tuning parameter $\beta=4\times 10^3$ for both training and testing. The figure highlights that, with a suitable choice of tuning parameter, the operator is able to schedule charging at time slots where prices are lower and avoid charging at the peak of prices, as desired. In particular, it achieves a  lower cost compared with the commonly used MPC scheme described in~\eqref{eq:MPC}-\eqref{eq:MPC2}.The offline optimal charging rates are also provided.}

\begin{figure}[t]
    \centering    \includegraphics[scale=0.4]{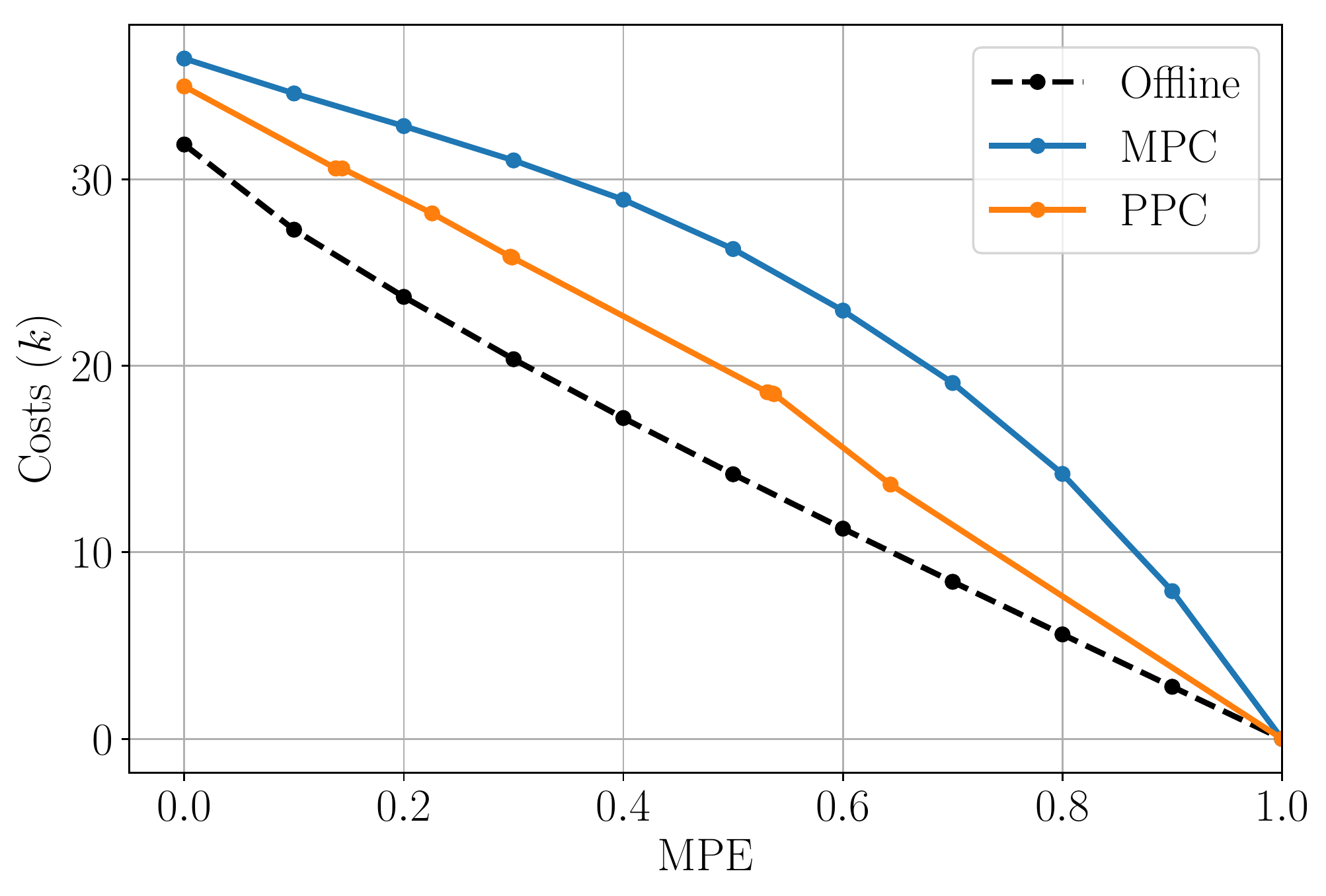}
          \caption{Cost-energy curves for the offline optimization in~\eqref{eq:offline_1}-\eqref{eq:offline_3} (for the example in Section~\ref{sec:example}), MPC (defined in~\eqref{eq:MPC}-\eqref{eq:MPC2}) and PPC (introduced in Section~\ref{sec:PPC}).}
\label{fig:cost_energy}
\end{figure}

\subsubsection{Comparison of PPC and MPC}
In Figure~\ref{fig:cost_energy} we show the changes of the cumulative costs by varying the mean percentage error (MPE) with respect to the undelivered energy defined in~\eqref{eq:mpe}. There are in total $K=14$ episodes tested for days selected from Dec. 2, 2019 to Jan. 1, 2020 (days with less than $30$ charging sessions are removed, i.e. we require, $N\geq 30$). Note that $0\leq \mathsf{MPE}\leq 1$ and the larger MPE is, the higher level of constraint violations we observe.
We allow constraint violations and modify parameters in the MPC and PPC to obtain varying MPE values. For the PPC, we vary the tuning parameter $\beta$ to obtain the corresponding costs and MPE. For the MPC in our tests, we solve the following optimization at each time for obtaining the charging decisions $s_t=(s_t(1),\ldots,s_{t}(\nn'))$:
\begin{subequations}
\begin{align}
\label{eq:MPC}
s_t = \argmin_{s_t} \sum_{\tau=t}^{t'}c_{\tau}\Big(\sum_{i=1}^{\nn'}s_{\tau}(i)\Big) \ &
\mathrm{subject} \ \mathrm{to}:\\
s_{\tau}(i)= 0 \ , \  \tau<a(i), & \ i=1,\ldots,\nn',\\
s_{\tau}(i)= 0\ , \ \tau>d(i), & \  i=1,\ldots,\nn',\\
\sum_{i=1}^{\nn'}s_{\tau}(i)= u_{t}, & \ \tau=t,\ldots,t',\\
\sum_{\tau=1}^{\nt}s_{\tau}(i) = \gamma \cdot e(i), & \ i=1,\ldots,\nn',\\
\label{eq:MPC2}
0\leq s_{\tau}(i)\leq r(i), & \ \tau=t,\ldots,t'
\end{align}
\end{subequations}
where at time $t$, the integer $\nn'$ denotes the number of EVs being charged at the charging station and the time horizon of the online optimization is from $\tau=t$ to $t'$, which is the latest departure time of the present charging sessions; $a(i)$ and $d(i)$ are the arrival time and departure time of the $i$-th session; $\gamma>0$ relaxes the energy demand constraints and therefore changes the MPE region for MPC. The offline cost-energy curve is obtained by varying the energy demand constraints in~\eqref{eq:f2} in a similar way. We assume there is no admission control and an arriving EV will take a charger whenever it is idle for both MPC and PPC. Note that this MPC framework is widely studied~\cite{rosolia2018data} and used in EV charging applications~\cite{lee2018large}. It requires the precise knowledge of a $108$-dimensional state vector of $54$ chargers at each time step. 
We observe that with \textit{only} feasibility information, PPC outperforms MPC for all $0\leq \mathsf{MPE}\leq 1$. The main reason that PPC outperforms vanilla MPC is that PPC utilizes MEF as its input, which is generated by a pre-trained aggregator function. Therefore the MEF may contain useful future feasibility information that vanilla MPC does not know, despite that it is trained and tested on separate datasets.}


\section{Concluding remarks}
\label{sec:conclusion}
This paper formalizes and studies the closed-loop control framework created by the interaction between a system operator and an aggregator.  Our focus is on the feedback signal provided by the aggregator to the operator that summarizes the real-time availability of flexibility among the loads controlled by the aggregator.  We present the design of an maximum entropy feedback (MEF) signal based on entropic maximization.
We prove a close connection between the MEF signal and the system capacity, and show that when the signal is used the system operator can perform online cost minimization while provably respecting the private constraints of the loads controlled by the aggregator {and satisfying optimality under certain regularity assumptions.} Further, we illustrate the effectiveness of these designs using simulation experiments of an EV charging facility.  

There is much left to explore about this MEF signal presented in this work.  In particular, computing it is computationally intensive and we use reinforcement learning for approximating the MEF.  Improving the learning design and developing other approximations are of particular interest.  Further, exploring the use of flexibility feedback for operational objectives beyond cost minimization and capacity estimation is an important goal.  Finally, exploring the application of the defined real-time aggregate flexibility in other settings, such as multi-aggregator systems, frequency regulation and real-time pricing, is exciting.

\addcontentsline{toc}{section}{Bibliography}
{\bibliographystyle{IEEEtran}
	\small
	\bibliography{final_lib}}

\begin{figure*}
    \centering
    {
    \begin{align}
    \label{eq:sc1}
    u_t^* =   \argmin_{u_t\in\mathsf{U}} \Bigg(c_t(u_t)  + \underbrace{\min_{u_{t+1:\nt}} \left(\sum_{\tau=t+1}^{\nt}c(u_{\tau}) -\log {\mu\left(\mathsf{S}(u^*_{t-k:t-1},u_{t:\nt})\right)} \right)}_{f(u_t)}\Bigg)
    .
    \end{align}}
    \hrule
\end{figure*}

\appendices

{
\section{Proof of Lemma~\ref{lemma:measurability}}
\label{app:measurability}

\begin{proof}
We first define a set $f^{-1}\left(\mathsf{X}_t\right)$ denoting the inverse image of the set $\mathsf{X}_t$ for actions:
$
 f^{-1}\left(\mathsf{X}_t\right)({u}_{<t}):=\left\{u\in\mathsf{U}: f(x_{t-1},u)\in \mathsf{X}_t \right\}.  
$
The inverse image $f^{-1}\left(\mathsf{X}_t\right)$ depends only on the past actions ${u}_{<t}$ since the states ${x}_{<t}$ are determined by ${u}_{<t}$ and a pre-fixed initial state $x_1$ via the dynamics in~\eqref{eq:system}.
Note that $\mathsf{X}_t$ and the dynamic $f$ are Borel measurable. Therefore the inverse image $f^{-1}\left(\mathsf{X}_t\right)$ is also a Borel set, implying that the intersection $\mathsf{U}_t\bigcap f^{-1}\left(\mathsf{X}_t\right)$ is also Borel measurable. The set of  feasible action trajectories $\mathsf{S}$ can be reprised as
\begin{align*}
\mathsf{S}:=\left\{{u}\in\mathsf{U}^{\nt}:u_t\in\mathsf{U}_t\bigcap f^{-1}\left(\mathsf{X}_t\right)(u_{<t}), \forall t\in [\nt]\right\},
\end{align*}
which is a Borel measurable set of all \textit{feasible} sequences of actions.
\end{proof}}

\section{Proof of Lemma~\ref{lemma:explicit}}
\label{app:proof_MEF}
\begin{proof}[Proof of Lemma~\ref{lemma:explicit}] 
We prove the statement by induction. It is straightforward to verify the results hold when $\nt=1$. We suppose the lemma is true when $\nt=m$. Suppose $\nt=m+1$.
Let
\begin{align*}
\digamma(u):=\max_{p_2,\ldots,p_{\nt}}\sum_{t=2}^{\nt}\mathbb{H}\left(U_{t}|\mathbf{U}_{2:t-1}; U_1=u\right)
\end{align*}
denote the optimal value corresponding to the time horizon $t\in [\nt]$, given the first action $U_1 = u$. By the definition of conditional entropy, we have
\begin{align*}
{\digamma}= \max_{p_{1}}\int_{u\in\mathsf{U}}p_{1}(u)\digamma(u)\mathsf{d}u+\mathbb{H}(p_{1}).
\end{align*}
By the induction hypothesis, $\digamma(u)= \mu\left(\mathsf{S}(u)\right)$. Therefore, 
\begin{align*}
{\digamma} =& \max_{p_{1}}\int_{u\in\mathsf{U}}p_{1}(u)\log\mu\left(\mathsf{S}(u)\right)\mathsf{d}u+\mathbb{H}(p_{1})\\
=&\max_{p_{1}}\int_{u\in\mathsf{U}}p_{1}(u)\log\left(\frac{\mu\left(\mathsf{S}(u)\right)}{p_1(u)}\right)\mathsf{d}u
\end{align*}
whose optimizer $p_{1}^*$ satisfies (\ref{eq:optimal}) and we get ${\digamma}=\mu\left(\mathsf{S}\right)$. The lemma follows by finding the optimal conditional distributions $p_1^*,\ldots,p_{\nt}^*$ inductively.
\end{proof}

\section{Proof of Corollary~\ref{coro:property}}
\label{app:proof_feasibility}

\begin{proof}[Proof of Corollary~\ref{coro:property}]
Lemma~\ref{lemma:explicit} shows that the value of the density function corresponding to choosing $u_t=u$ in the MEF is proportional to the measure of $\mathsf{S}((u_{<t}, u))$, completing the proof of interpretability. According to the explicit expression in~\eqref{eq:optimal} of the MEF, the selected action $u$ always ensures that $\mu(\left(\mathsf{S}((u_{< t},u)\right))>0$ and therefore the set $\mathsf{S}((u_{< t},u)$ is non-empty. This guarantees that the generated sequence $u$ is always in $\mathsf{S}$.
\end{proof}

\section{Proof of Corollary~\ref{corollary:feasibility}}

\begin{proof}[Proof of Corollary~\ref{corollary:feasibility}]
The explicit expression in Lemma~\ref{lemma:explicit} ensures that whenever $p^*_t(u_t|u_{<t})>0$, then there is always a feasible sequence of actions in $\mathsf{S}\left(u_{<t}\right)$. Now, if the tuning parameter $\beta_t>0$, then the optimization~\eqref{eq:PPC} guarantees that $p^*_t(u_t|u_{<t})>0$ for all $t\in [\nt]$; otherwise, the objective value in~\eqref{eq:PPC} is unbounded. Corollary~\ref{coro:property} guarantees that for any sequence of actions $u=(u_1,\ldots,u_{\nt})$, if $ p^*_t(u_t|u_{<t})>0$ for all $t\in [\nt]$, then $u\in\mathsf{S}$. Therefore, the sequence of actions $u$ given by the PPC is always feasible. 
\end{proof}

{

\section{Proof of Theorem~\ref{lemma:connection}}
\label{app:proof_connection}

\begin{proof}[Proof of Lemma~\ref{lemma:connection}]
We note that the offline optimization~\eqref{eq:offline_1}-\eqref{eq:offline_3} is equivalent to
\begin{align}
\label{eq:2.1}
\inf_{u\in\mathsf{U}^{\nt}} \sum_{t=1}^{\nt} &c_t(u_t) - \beta\log q(u)
\end{align}
for any $\beta>0$ and $q(u)$ is a uniform distribution on $\mathsf{S}$:
\begin{align}
\label{eq:uniform}
    q(u):=\begin{cases}
    1/\mu\left(\mathsf{S}\right) & \text{ if } u\in \mathsf{S}\\
    0 & \text{ otherwise}
    \end{cases}
\end{align}
where $\mu(\cdot)$ is the Lebesgue measure.
Further, decomposing the joint distribution $q(u)=\prod_{t=1}^{\nt}p^*_t(u_t|u_{<t})$ into the conditional distributions given by~\eqref{eq:af1}-\eqref{eq:af3}, the objective function~\eqref{eq:2.1} becomes
\begin{align}
\nonumber
&\sum_{t=1}^{\nt} c_t(u_t) - \beta\log \left(\prod_{t=1}^{\nt} p^*_t(u_t|u_{<t})\right)\\
\nonumber
=&\sum_{t=1}^{\nt} \left(c_t(u_t) - \beta\log p^*_t(u_t|u_{<t})\right),
\end{align}
which implies the lemma.
\end{proof}

\section{Proof of Theorem~\ref{thm:optimality}}

\label{app:proof_optimality}
\begin{proof}
Define the following optimal \textit{cost-to-go function}, which computes the minimal cost given a subsequence of actions:
\begin{align*}
&V^{\mathsf{OPT}}_t(u_{<t})
:= \min_{u_{t:\nt}\in\mathsf{U}^{\nt-t+1}} \left(\sum_{\tau=t}^{\nt}c(u_{\tau}) -\beta\log p^*_t(u_{t:\nt}|u^*_{t-k:t-1}) \right)\\
=& \min_{u_t\in\mathsf{U}} \Bigg(c(u_t) -\log p^*_t(u_t|u^*_{t-k:t-1})+ \\
&\min_{u_{t+1:\nt}\in\mathsf{U}^{\nt-t}} \Big(\sum_{\tau=t+1}^{\nt}c(u_{\tau}) -\log p^*_{t+1}(u_{t+1:\nt}|u^*_{t-k:t}) \Big)\Bigg)\\
=& \min_{u_t\in\mathsf{U}} \left(c(u_t) -\log p^*_t(u_t|u^*_{t-k:t-1}) + V^{\mathsf{OPT}}_{t+1}(u_{\leq t}) \right).
\end{align*}
Let $\mu(
\cdot)$ denote the Lebesgue measure. Based on the definition of the optimal cost-to-go functions defined above and applying Lemma~\ref{lemma:connection}, we obtain the following expression of the optimal action $u_t^*$ at each time $t\in [\nt]$:
\begin{align*}
u_t^* =& \argmin_{u_t\in\mathsf{U}} \left(c_t(u_t) -\beta\log p^*_t(u_t|u^*_{t-k:t-1}) + V^{\mathsf{OPT}}_{t+1}\left(u_{t-k+1:t}\right)\right)\\
= & \argmin_{u_t\in\mathsf{U}} \Bigg(c_t(u_t) -\beta\log \frac{\mu\left(\mathsf{S}(u^*_{t-k:t-1},u_t)\right)}{\mu\left(\mathsf{S}(u^*_{t-k:t-1})\right)} \\
&+ \min_{u_{t+1:\nt}} \Big(\sum_{\tau=t+1}^{\nt}\big(c(u_{\tau})
-\beta\log \frac{\mu\left(\mathsf{S}(u^*_{t-k:t-1},u_{t:\tau})\right)}{\mu\left(\mathsf{S}(u^*_{t-k:t-1},u_{t:\tau-1})\right)} \big)\Big)\Bigg)\\
= & \argmin_{u_t\in\mathsf{U}} \Bigg(c_t(u_t) -\beta\log \frac{\mu\left(\mathsf{S}(u^*_{t-k:t-1},u_t)\right)}{\mu\left(\mathsf{S}(u^*_{t-k:t-1})\right)}\\
&+ \min_{u_{t+1:\nt}} \Big(\sum_{\tau=t+1}^{\nt}c(u_{\tau}) +\beta \log \frac{\mu\left(\mathsf{S}(u^*_{t-k:t-1},u_t)\right)}{\mu\left(\mathsf{S}(u^*_{t-k:t-1},u_{t:\nt})\right)} \Big)\Bigg),
\end{align*}
which implies~\eqref{eq:sc1} and when $u_{<t}=u^*_{<t}$, the solution of the PPC in Algorithm~\ref{alg:PPC} satisfies
\begin{align}
\nonumber
u_t =
&\argmin_{u_t\in \mathsf{U}} \left(c_t(u_t) - \beta\log p_t(u_t|u^*_{t-k:t-1})\right)\\
\nonumber
    = &\argmin_{u_t\in \mathsf{U}} \left(c_t(u_t) -  \beta\log \frac{\mu\left(\mathsf{S}(u^*_{t-k:t-1},u_t)\right)}{\mu\left(\mathsf{S}(u^*_{t-k:t-1})\right)}\right)\\
    \label{eq:function_g}
    = &\argmin_{u_t\in \mathsf{U}} \Bigg(c_t(u_t) + \beta \underbrace{\log {\left(1/\mu\left(\mathsf{S}(u^*_{t-k:t-1},u_t)\right)\right)}}_{g(u_t)}\Bigg)
    .
\end{align}
Since the cost functions $c_t(u)$ and the measure $\log(1/\mu(\mathsf{S}(u)))$ are strictly convex, the inner minimization in~\eqref{eq:sc1} is a convex minimization and hence $f(u_t)$ is convex. Therefore, $u_t^*$ in~\eqref{eq:sc1} is unique. Denoting by $c'$ and $f'$ the corresponding derivatives of a given cost function $c$ and the function $f$ defined in~\eqref{eq:sc1},  we have
\begin{align*}
    c_t'(u_t^*) + f'(u_t^*) = 0.
\end{align*}
Furthermore, the unique solution of the PPC scheme satisfies
\begin{align*}
    c_t'(u_t) + \beta g'(u_t) = 0
\end{align*}
where $g'$ is the derivative of the function $g$ defined in~\eqref{eq:function_g}.
Choosing $\beta=b_t=f'(u_t^*)/g'(u_t^*)$ implies that $u_t=u_t^*$ for all $t\in [\nt]$.
\end{proof}

}

\end{document}